\newtheorem{theorem}{Theorem}
\newtheorem{corollary}[theorem]{Corollary}
\newtheorem{lemma}[theorem]{Lemma}
\newtheorem{proposition}[theorem]{Proposition}
\theoremstyle{definition}
\newtheorem{definition}[theorem]{Definition}
\theoremstyle{remark}
\newtheorem{remark}[theorem]{Remark}
\newcommand{\RR}{\mathbb{R}}
\newcommand{\ZZ}{\mathbb{Z}}
\newcommand{\norm}[1]{\left\Vert#1\right\Vert}
\newcommand{\abs}[1]{\left\vert#1\right\vert}
\newcommand{\id}{\mathrm{Id}}
\begin{document}

\title{Local Well-posedness of the Camassa-Holm equation on the real line}

\author[J. Lee]{Jae Min Lee}
\address{Department of Mathematics, The Graduate Center, City University of New York, NY 11106, USA}
\email{jlee10@gradcenter.cuny.edu}

\author[S.C. Preston]{Stephen C. Preston}
\address{Department of Mathematics, Brooklyn College, Brooklyn, NY 11210, USA and Department of Mathematics, The Graduate Center, City University of New York, NY 10016, USA}
\email{stephen.preston@brooklyn.cuny.edu}

\subjclass[2010]{35Q35, 53D25}%
\keywords{Camassa-Holm equation, local existence.}%

\date{\today}%

\maketitle

\section{Introduction}

We study the one dimensional nonperiodic Camassa-Holm [CH] equation
\begin{equation}\label{CH1}
u_t-u_{txx}+3uu_x-uu_{xxx}-2u_x u_{xx}=0,\;\;x,t \in \RR
\end{equation}
It was originally proposed by Camassa and Holm \cite{CH1993, CH1994} as a model for shallow water waves. The equation has remarkable properties like infinitely many conserved quantities, soliton-like solutions, and bi-hamiltonian structures. The solutions to the equation can also be interpreted as geodesics of the right invariant Sobolev $H^1$ metric on the diffeomorphism group of the circle. Background and details on these topics can be found in the following papers and references in these works: for symmetries, complete integrability, and bi-hamiltonian structures, see ~\cite{CH1993, FF1981, AK1999}; for the geometric interpretation, see \cite{Mis1998, KM2003, EM1970}; for peakon and soliton solutions, see ~\cite{BSS2000, ConM1999, Len2005}.

The local well-posedness of the corresponding Cauchy problem of the CH equation in both periodic and nonperiodic cases has been studied extensively. In 1997, Constantin \cite{Con} showed the local well-posedness in the Sobolev spaces $H^s(S^1)$ for $s \ge 4$ where $S^1=\RR/\ZZ$. Then Constantin and Escher \cite{ConE1} improved the result in 1998 with $s \ge 3$. Another approach was taken by Danchin \cite{D} in 2001 using the Besov spaces $B^s_{p,r}(S^1)$ with $s>\max\{1+1/p,3/2\}$, $1 \le p \le \infty$, and $1 \le r<\infty$. For the nonperiodic case, local well-posedness was proved for initial data in $H^s(\RR)$ with $s>3/2$ by Li and Olver \cite{LiO} and Rodriguez-Blanco \cite{R}.

In 2002, Misiolek \cite{Mis2002} proved the local well-posedness of the periodic CH equation in the space of continuously differentiable functions on $S^1$, by viewing the equation as an ODE in a Banach space using the geometric interpretation. In this paper, we want to establish an analogous $C^1$ result for the non-periodic problem using a similar technique. The main difference when we consider the non-periodic case is that the domain is not compact, so we must consider $C^1$ diffeomorphisms with an appropriate decaying condition. We study this diffeomorphism group and show that it is a topological group, so that operations of composition and inversion are continuous.

It is interesting to compare the present work with the recent paper by Linares at el. \cite{LPS}. There, the authors assume slightly weaker hypotheses: their initial data $u_0$ is in $H^1 \cap W^{1,\infty}$ rather than $H^1 \cap C^1$, which allows them to include peakon solutions of the form $u(t,x)=e^{-\abs{x-ct}}$. Using different methods, they obtain local existence and uniqueness in this space, but significantly they do not obtain continuous dependence on the initial data (and in fact an explicit example shows it is \emph{false} in that context). This demonstrates that the group of piecewise $C^1$ diffeomorphisms on $\RR$ does \emph{not} form a topological group, since the continuity properties of the inversion and composition are the primary tool to make our proofs work.

Note that even when the solution operator is continuous, it is not uniformly continuous even in very strong Sobolev topologies; see for example Himonas-Kenig-Misio{\l}ek~\cite{HKM2010}. This is a consequence of the failure of the group operations to be uniformly continuous, as we will see. 

In section 3, we use the Lagrangian approach for the local well-posedness of the CH equation. That is, we will write the equation entirely in terms of the flow $\eta$ of particle trajectories, which turns the equation into an ODE on the open subset of a Banach space. We will show that the resulting vector field is locally Lipschitz, so that we can apply the Picard Theorem for ODEs in Banach space (see Lang \cite{L}). Topological group properties will then ensure that the solution depends continuously on the initial data, which completes the proof of the local well-posedness.

\section{The group of $C^1 \cap H^1$ diffeomorphisms}

\begin{definition}
We denote by $\mathcal{D}(\RR)$ the set of maps $\eta:\RR \to \RR$ satisfying the following
conditions
\begin{enumerate}
\item $\eta(x)$ is a $C^1$ function with a bound $a \le \eta'(x) \le b$ on $\RR$ for some $0<a<b$,
\item $\int_{\RR}\abs{\eta(x)-x}^2 dx<\infty$ and $\int_{\RR}\abs{\eta'(x)-1}^2\;dx<\infty$,
\item ${\displaystyle \lim_{|x| \to \infty}\eta'(x)=1}$.
\end{enumerate}
Similarly, denote by $\mathcal{V}_1(\RR)$ the set of maps $u:\RR \to \RR$ satisfying the
conditions
\begin{enumerate}
\item $u(x)$ is a $C^1$ function with a bound $|u'(x)| \le M$ on $\RR$ for some $M>0$,
\item $\int_{\RR}\abs{u(x)}^2 dx<\infty$ and $\int_{\RR}\abs{u'(x)}^2\;dx<\infty$,
\item ${\displaystyle \lim_{|x| \to \infty}u'(x)=0}$.\end{enumerate}
\end{definition}

The topology of $\mathcal{V}_1(\RR)$ is generated by the following norm:
\begin{equation}\label{norm}
\norm{u}_{1,1}=\norm{u}_{C^1}+\norm{u}_{H^1}=\sup_{x \in \RR}\abs{u(x)}+\sup_{x \in
\RR}\abs{u'(x)}+\sqrt{\int_{\mathbb{R}}u(x)^2 dx+\int_{\RR}u'(x)^2 dx},
\end{equation}
and the corresponding distance on the space $\mathcal{D}(\RR)$ is given by
\begin{equation}\label{distance}
D(\eta,\xi)=\norm{\eta-\xi}_{1,1}.
\end{equation}

The conditions in the above definition have useful consequences, which we list here; the proof is straightforward.
\begin{lemma}\label{limitatinfty}
Suppose that $f$ is a $C^1$ function with $|f'(x)| \le M$ for all $x$ and $\int_{\RR}f(x)^2 dx<\infty$. Then ${\displaystyle \lim_{|x| \to \infty}f(x)=0}$. If, in addition, we have ${\displaystyle \lim_{|x| \to \infty}f'(x)=0}$, then $f'$ is uniformly continuous.
\end{lemma}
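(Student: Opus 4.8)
The plan is to prove the two assertions separately, since they draw on different features of the hypotheses.

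\textbf{First claim} (that $\lim_{|x|\to\infty} f(x) = 0$): I would argue by contradiction. The key structural point is that the bound $|f'| \le M$ makes $f$ Lipschitz, which rules out tall, thin spikes; so if $f$ fails to decay, it must stay bounded away from zero on intervals of a definite length, and square-integrability then forces the integral to diverge. Concretely, suppose the limit fails. Then there is an $\epsilon > 0$ and a sequence $|x_n| \to \infty$ with $|f(x_n)| \ge \epsilon$, and after passing to a subsequence I may assume all $x_n$ lie on one side, say $x_n \to +\infty$. Using $|f(x) - f(x_n)| \le M|x - x_n|$, I get $|f(x)| \ge \epsilon/2$ whenever $|x - x_n| \le \epsilon/(2M)$, so $f^2 \ge \epsilon^2/4$ on an interval of length $\epsilon/M$ centered at each $x_n$. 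Thinning the subsequence further so that successive $x_n$ are more than $\epsilon/M$ apart makes these intervals disjoint, whence $\int_{\RR} f^2 \ge \sum_n \frac{\epsilon^2}{4}\cdot\frac{\epsilon}{M} = \infty$, contradicting $\int_{\RR} f^2 < \infty$.

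This packing-and-contradiction step is the only real content of the lemma, and is where I would be most careful: the interplay of the Lipschitz bound (which sets the interval length) with the integrability (which is violated once the intervals are disjoint) is exactly what makes the decay forced rather than merely plausible.

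\textbf{Second claim} (that $f'$ is uniformly continuous): Here $f'$ is continuous on $\RR$ and, by the added hypothesis, tends to $0$ as $|x| \to \infty$. The cleanest route is to observe that $f'$ extends to a continuous function on the two-point compactification $[-\infty, +\infty]$, assigning the value $0$ at $\pm\infty$, and that a continuous function on a compact metric space is uniformly continuous by Heine--Cantor; restricting back to $\RR$ then gives the claim. If one prefers to avoid compactification, the same is seen directly: given $\epsilon > 0$, pick $N$ with $|f'(x)| < \epsilon/2$ for $|x| \ge N$, apply Heine--Cantor on the compact interval $[-N-1, N+1]$ to obtain a modulus $\delta' > 0$, and set $\delta = \min(\delta', 1)$. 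For $|x-y| < \delta$, either both points lie in $[-N-1, N+1]$, where uniform continuity there applies, or one lies outside, in which case $|x-y| < 1$ forces both points to satisfy $|x|, |y| > N$ on the same side, so that $|f'(x) - f'(y)| \le |f'(x)| + |f'(y)| < \epsilon$. I expect no genuine obstacle in this second part; it is a routine compactness argument, and the bulk of the difficulty of the lemma sits entirely in the first claim.
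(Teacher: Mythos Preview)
Your argument is correct on both counts. The contradiction via Lipschitz ``fat spikes'' for the first claim and the Heine--Cantor/compactification argument for the second are exactly the standard routes, and your case analysis in the direct version of the second claim is sound (the ``one lies outside'' case indeed forces both points past $N$ on the same side once $\delta\le 1$).

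There is nothing to compare against: the paper does not prove this lemma at all, merely remarking that ``the proof is straightforward'' and moving on. Your write-up supplies precisely the kind of elementary justification the authors had in mind.
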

From this lemma, we conclude another decaying property: ${\displaystyle \lim_{|x| \to \infty}\abs{\eta(x)-x}=0}$ and ${\displaystyle \lim_{x \to \infty}u(x)=0}$. Furthermore we know that $\eta'$ and $u'$ are uniformly continuous. This uniform continuity of the derivatives is a very useful tool for many estimates in this paper, and in fact it is not hard to check that uniform continuity of the derivative together with an $H^1$ bound implies the decay properties above.

\begin{remark} We can easily check that $\mathcal{D}(\RR)$ is a topological manifold: given
$\eta \in \mathcal{D}(\RR)$, let $v(x):=\eta(x)-x$; then $v$ and $v'$ are bounded continuous
functions approaching zero asymptotically, satisfying $v'(x)>-1$ for all $x$ and
$\norm{v}_{L^2}<\infty$. Conversely, any such $v$ gives $\eta=\id+v$ in $\mathcal{D}(\RR)$, so
that the map $\eta \mapsto v$ is a bijection. The image of this bijection is the set $\{v \in
\mathcal{V}_1(\RR)\;:\;\Phi(v)>-1\}$ where ${\displaystyle \Phi(v):=\min_{x \in \RR}v'(x)}$,
which is obviously a continuous function in the topology \eqref{distance}; hence, we have a map to an
open convex subset of a Banach space $\mathcal{V}_1(\RR)$. This makes $\mathcal{D}(\RR)$
a manifold with just one chart.
\end{remark}

Next, we show that $\mathcal{D}(\RR)$ is a group. We know that $C^1$ diffeomorphisms themselves
form a group, but we need to check that it is closed under the additional conditions we have imposed.
After that, we will verify that it is a topological group as well, which is a bit more involved.

\begin{proposition}
The set $\mathcal{D}(\RR)$ is a group.
\end{proposition}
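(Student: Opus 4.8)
The plan is to verify the group axioms for $\mathcal{D}(\RR)$ under composition, taking the identity map as the neutral element. The nontrivial content is closure under composition and inversion, since associativity is automatic for composition of maps. So I would organize the proof around two claims: that $\eta \circ \xi \in \mathcal{D}(\RR)$ whenever $\eta, \xi \in \mathcal{D}(\RR)$, and that $\eta^{-1} \in \mathcal{D}(\RR)$ whenever $\eta \in \mathcal{D}(\RR)$ (the inverse exists as a $C^1$ map by the inverse function theorem, since $\eta' \ge a > 0$ forces $\eta$ to be a $C^1$ diffeomorphism of $\RR$ onto $\RR$).

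First I would treat composition. Condition (1) is easy: by the chain rule $(\eta \circ \xi)'(x) = \eta'(\xi(x))\,\xi'(x)$, so the derivative is continuous and is bounded between $a_\eta a_\xi$ and $b_\eta b_\xi$, giving new positive upper and lower bounds. For condition (3), as $|x| \to \infty$ we have $\xi(x) \to \pm\infty$ (since $\xi' \ge a_\xi$ forces $\xi$ to be proper), so $\eta'(\xi(x)) \to 1$ and $\xi'(x) \to 1$, hence the product tends to $1$. The work is in condition (2), the two $L^2$ bounds. For the first integral I would write $\eta(\xi(x)) - x = \big(\eta(\xi(x)) - \xi(x)\big) + \big(\xi(x) - x\big)$; the second term is already $L^2$, and for the first I substitute $y = \xi(x)$, so that $\int_{\RR} |\eta(\xi(x)) - \xi(x)|^2\,dx = \int_{\RR} |\eta(y) - y|^2\, \frac{dy}{\xi'(\xi^{-1}(y))} \le \frac{1}{a_\xi}\int_{\RR}|\eta(y)-y|^2\,dy < \infty$, using $\xi' \ge a_\xi$. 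For the derivative integral I would write $(\eta\circ\xi)'(x) - 1 = \eta'(\xi(x))\big(\xi'(x) - 1\big) + \big(\eta'(\xi(x)) - 1\big)$, bound the first summand in $L^2$ using the uniform bound $\eta' \le b_\eta$ together with $\xi' - 1 \in L^2$, and bound the second via the same change of variables $y = \xi(x)$ together with $\eta' - 1 \in L^2$.

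Next I would handle inversion. Condition (1) follows from differentiating $\eta^{-1}(\eta(x)) = x$, which gives $(\eta^{-1})'(y) = 1/\eta'(\eta^{-1}(y))$, continuous and bounded between $1/b$ and $1/a$. Condition (3) follows because $\eta^{-1}(y) \to \pm\infty$ as $|y| \to \infty$, so $\eta'(\eta^{-1}(y)) \to 1$ and hence its reciprocal tends to $1$. For condition (2), for the displacement I use $|\eta^{-1}(y) - y| = |\eta^{-1}(y) - \eta(\eta^{-1}(y))|$ and the substitution $y = \eta(x)$, giving $\int_{\RR}|\eta^{-1}(y) - y|^2\,dy = \int_{\RR}|x - \eta(x)|^2\,\eta'(x)\,dx \le b\int_{\RR}|\eta(x)-x|^2\,dx < \infty$; for the derivative I write $(\eta^{-1})'(y) - 1 = \frac{1 - \eta'(\eta^{-1}(y))}{\eta'(\eta^{-1}(y))}$, bound the denominator below by $a$, and apply the substitution $y = \eta(x)$ to reduce to $\int_{\RR}|1 - \eta'(x)|^2\,\eta'(x)\,dx \le b\int_{\RR}|\eta'(x)-1|^2\,dx < \infty$.

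The main obstacle is bookkeeping rather than any deep difficulty: the change-of-variables arguments all rely on the uniform bounds $a \le \eta' \le b$ to keep the Jacobian factors under control, and one must be careful that these bounds are exactly what condition (1) provides. I expect the derivative-in-$L^2$ estimate for the composition to be the most delicate single step, since it is the one genuinely requiring the triangle-inequality splitting into two terms, each controlled by a different one of the hypotheses; the rest reduces to routine substitution. I would also remark at the outset that $\eta$ being proper (hence a bijection of $\RR$) is a consequence of the lower bound $\eta' \ge a$, which I use repeatedly in sending $\xi(x)$ and $\eta^{-1}(y)$ to infinity.
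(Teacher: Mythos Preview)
Your proposal is correct and follows essentially the same approach as the paper: chain rule for conditions (1) and (3), and change-of-variables together with the uniform bounds $a \le \eta' \le b$ and a triangle-inequality splitting for condition (2). The only organizational difference is that the paper proves closure under inversion first and then, in its composition estimate for $\int |\phi(\eta(y))-y|^2\,dy$, substitutes $x=\eta(y)$ and invokes the already-established $L^2$ bound for $\eta^{-1}-\mathrm{id}$; your splitting $\eta(\xi(x))-x = (\eta(\xi(x))-\xi(x)) + (\xi(x)-x)$ avoids this dependence and is slightly more self-contained, but the two arguments are equivalent in spirit.
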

\begin{proof}
First we show that $\mathcal{D}(\RR)$ contains all inverses. If $\eta \in \mathcal{D}(\RR)$, then
$\xi:=\eta^{-1}$ exists for all $x$ by the inverse function theorem on $\RR$, and it is in $C^1$.
Furthermore, since $\xi'(x)=\eta'(\xi(x))^{-1}$, we see that bounds $a \le \eta'(x) \le b$ imply the
bounds $b^{-1} \le \xi'(x) \le a^{-1}$. Also, we can easily see that $\xi'(x) \to 1$ as $x \to \infty$,
since $\eta'(x) \to 1$. The fact that $\xi$ has finite $L^2$ distance from the identity comes from
$$\int_{\RR}\abs{\xi(x)-x}^2\;dx=\int_{\RR}\abs{\xi(\eta(y))-\eta(y)}^2 \eta'(y)\;dy \le
b\int_{\RR}\abs{\eta(y)-y}^2\;dy<\infty,$$
using the change of variables formula with $x=\eta(y)$. Similarly, we can check
$$\int_{\RR} \abs{\xi'(x)-1}^2\;dx \le \int_{\RR}\abs{\frac{1}{\eta'(y)}-1}^2\eta'(y)\;dy \le \frac{1}{a}\int_{\RR}\abs{\eta'(y)-1}^2\;dy<\infty.$$

Next, we show that $\mathcal{D}(\RR)$ contains compositions. Let $\eta, \phi \in \mathcal{D}(\RR)$ with the
bounds $0<a\le \eta'(x) \le b$ and $0<c \le \phi'(x) \le d$. Since
$(\phi \circ \eta)'(x)=\phi'(\eta(x))\eta'(x)$, the limit $(\phi \circ \eta)'(x) \to 1$ is obvious, and we also have the obvious bounds $ac \le (\phi \circ
\eta)'(x) \le bd$. Lastly, we can check that
$$\int_{\RR}\abs{\phi(\eta(y))-y}^2\;dy=\int_{\RR}\abs{\phi(x)-\xi(x)}^2 \xi'(x)\;dx \le
2a^{-1}\int_{\RR}\abs{\phi(x)-x}^2+2a^{-1}\int_{\RR}\abs{\xi(x)-x}^2\;dx<\infty,$$
and
\begin{align*}
\int_{\RR}\abs{\phi'(\eta(y))\eta'(y)-1}^2\;dy &\le 2\int_{\RR}\abs{\phi'(\eta(y))\eta'(y)-\eta'(y)}^2\;dy+2\int_{\RR}\abs{\eta'(y)-1}^2\;dy\\
&\le 2b\int_{\RR}\abs{\phi'(x)-1}^2\;dy+2\int_{\RR}\abs{\eta'(y)-1}^2\;dy<\infty.
\end{align*}
\end{proof}

We now show that $\mathcal{D}(\RR)$ is a topological group, which means that the inversion and composition maps are continuous in the distance \eqref{distance}. We prove this in the following lemmas.

\begin{lemma}\label{topgroup}
The map $\mathrm{Inv}:\eta \mapsto \eta^{-1}$ is continuous.
\end{lemma}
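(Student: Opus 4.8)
The plan is to bound $\norm{\eta^{-1}-\zeta^{-1}}_{1,1}$ for $\zeta$ close to $\eta$ and show it tends to zero as $\norm{\eta-\zeta}_{1,1}\to 0$; since $\mathcal{D}(\RR)$ is a metric space, this $\varepsilon$-$\delta$ formulation gives continuity. Write $\alpha=\eta^{-1}$ and $\beta=\zeta^{-1}$, both in $\mathcal{D}(\RR)$ by the preceding proposition. Because $\eta'\ge a>0$ and $\zeta\to\eta$ in the $C^1$ norm, for $\zeta$ sufficiently near $\eta$ all the derivatives that appear---$\zeta'$, $\alpha'=1/(\eta'\circ\alpha)$, $\beta'=1/(\zeta'\circ\beta)$, and $(\zeta^{-1}\circ\eta)'$---are bounded above and below by positive constants uniformly in $\zeta$, and I will use these uniform bounds throughout. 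For the sup and $L^2$ parts of $\alpha-\beta$, I start from $\eta(\alpha(x))=x=\zeta(\beta(x))$, so that $\zeta(\beta(x))-\zeta(\alpha(x))=(\eta-\zeta)(\alpha(x))$, and the mean value theorem gives $\abs{\alpha(x)-\beta(x)}\le \tfrac{2}{a}\abs{(\eta-\zeta)(\alpha(x))}\le \tfrac{2}{a}\norm{\eta-\zeta}_{C^0}$. Squaring and substituting $y=\alpha(x)$ (so $x=\eta(y)$, $dx=\eta'(y)\,dy\le b\,dy$) yields $\int_\RR\abs{\alpha-\beta}^2\le \tfrac{4b}{a^2}\norm{\eta-\zeta}_{L^2}^2$, so both the $C^0$ and $L^2$ parts of $\alpha-\beta$ are controlled linearly by $\norm{\eta-\zeta}_{1,1}$.

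Next I treat the sup norm of $\alpha'-\beta'$. Writing $\alpha'-\beta'=\frac{(\zeta'\circ\beta)-(\eta'\circ\alpha)}{(\eta'\circ\alpha)(\zeta'\circ\beta)}$ and bounding the denominator below by a positive constant, it suffices to control the numerator, which I split as $[(\zeta'-\eta')\circ\beta]+[(\eta'\circ\beta)-(\eta'\circ\alpha)]$. The first bracket is at most $\norm{\zeta'-\eta'}_{C^0}\to 0$. For the second, I invoke the uniform continuity of $\eta'$ (from Lemma~\ref{limitatinfty}) together with the sup bound $\norm{\alpha-\beta}_{C^0}\le \tfrac2a\norm{\eta-\zeta}_{C^0}\to0$ already obtained; uniform continuity converts this small displacement into a small uniform change $\abs{(\eta'\circ\beta)-(\eta'\circ\alpha)}\to0$. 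Hence $\norm{\alpha'-\beta'}_{C^0}\to0$.

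The remaining and most delicate term is $\int_\RR\abs{\alpha'-\beta'}^2$. Using the same decomposition and denominator bound, this reduces to controlling $\int_\RR\abs{(\zeta'-\eta')\circ\beta}^2$ and $\int_\RR\abs{(\eta'\circ\beta)-(\eta'\circ\alpha)}^2$. The first is handled by substituting $y=\beta(x)$ as above, giving a bound by a constant times $\norm{\zeta'-\eta'}_{L^2}^2\to 0$. For the second I substitute $y=\alpha(x)$ and set $g:=\eta'-1$ and $\psi:=\zeta^{-1}\circ\eta$, obtaining $\int_\RR\abs{(\eta'\circ\beta)-(\eta'\circ\alpha)}^2\,dx\le b\int_\RR\abs{g(\psi(y))-g(y)}^2\,dy$, where $g\in L^2(\RR)$ is bounded and uniformly continuous and $\norm{\psi-\id}_{C^0}\to 0$ (by the first-paragraph argument applied to $\psi$). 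The key difficulty is that $g=\eta'-1$ is only continuous, not Lipschitz, so one cannot bound $\abs{g(\psi(y))-g(y)}$ by $\abs{\psi(y)-y}$ times a constant; this is the main obstacle.

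To overcome it I use an $L^2$-modulus-of-continuity/tail argument rather than a pointwise Lipschitz bound. Given $\varepsilon>0$, choose $R$ so large that $\int_{\abs{y}>R-1}\abs{g}^2<\varepsilon$; then, for $\zeta$ close enough that $\norm{\psi-\id}_{C^0}\le 1$, the tail $\int_{\abs{y}>R}\abs{g(\psi(y))-g(y)}^2$ is small, because $\int_{\abs y>R}\abs{g(y)}^2<\varepsilon$ while the change of variables $z=\psi(y)$, together with the uniform lower bound on $\psi'$ and the inclusion $\psi(\set{\abs y>R})\subseteq\set{\abs z>R-1}$, gives $\int_{\abs y>R}\abs{g(\psi(y))}^2\le C\int_{\abs z>R-1}\abs{g}^2<C\varepsilon$. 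On the compact set $\abs y\le R$ the integrand tends to $0$ pointwise (continuity of $g$ and $\psi(y)\to y$) and is dominated by the constant $4\norm{g}_{C^0}^2$, so bounded convergence gives $\int_{\abs y\le R}\abs{g(\psi(y))-g(y)}^2\to 0$ as $\zeta\to\eta$. Letting $\zeta\to\eta$ and then $\varepsilon\to0$ shows this last term vanishes, completing the estimate $\norm{\alpha'-\beta'}_{L^2}\to0$ and hence the continuity of $\mathrm{Inv}$.
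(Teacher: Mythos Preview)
Your proof is correct and follows essentially the same strategy as the paper: mean value theorem and change of variables for $\norm{\alpha-\beta}_{L^\infty}$ and $\norm{\alpha-\beta}_{L^2}$, uniform continuity of $\eta'$ for $\norm{\alpha'-\beta'}_{L^\infty}$, and for $\norm{\alpha'-\beta'}_{L^2}$ the same decomposition into an easy piece $(\zeta'-\eta')\circ\beta$ plus the delicate piece $\eta'\circ\beta-\eta'\circ\alpha$, handled by splitting into an $L^2$ tail of $g=\eta'-1$ and a compact core. The only cosmetic difference is that on the compact set you invoke bounded convergence, whereas the paper uses the modulus of continuity of $\eta'$ directly to get a uniform bound times the length of the interval; either argument works.
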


\begin{proof}
Let $\eta_1 \in \mathcal{D}(\RR)$ with $\mathrm{Inv}(\eta_1)=\xi_1$. We want to show that the map $\mathrm{Inv}$ is continuous at $\eta_1$. 
Let $\epsilon>0$ be given. We have to bound $\norm{\xi_1-\xi_2}_{1,1}$ in terms of $\rho:=\norm{\eta_1-\eta_2}_{1,1}$ for all $\eta_2 \in \mathcal{D}(\RR)$ with the inverse $\xi_2$. We will estimate $L^\infty$ and $L^2$ norms of $\xi_1-\xi_2$ and $\xi_1'-\xi_2'$ separately.

From the definition, we have $a_i \le
\eta_i'(x) \le b_i$ for $i=1,2$. Note that $a_2 \ge a_1-\rho$ and $b_2 \le b_1 +\rho$ which we will use later. Our goal is to get the bound of $\norm{\xi_1-\xi_2}_{1,1}$ solely in terms of quantities depending on $\eta_1$. From Lemma \ref{limitatinfty}, $\eta_1'$ is uniformly continuous and so there is a modulus of continuity $\omega_1:\RR
\to \RR$ satisfying $\omega_1(0)=\lim_{\rho \to 0} \omega_1(\rho)=0$ with $\omega_1$ increasing such
that
$$\abs{\eta_1'(x_1)-\eta_1'(x_2)}\le \omega_1(\abs{x_1-x_2}).$$
First, note that
$$\abs{\xi_1(x)-\xi_2(x)}=\abs{\xi_1(\eta_2(y))-y}=\abs{\xi_1(\eta_2(y))-\xi_1(\eta_1(y))} \le a_1^{-1}\abs{\eta_1(y)-\eta_2(y)}
\le \rho/a_1$$
for all $x$, where $x=\eta_2(y)$. So
$$\norm{\xi_1-\xi_2}_{L^\infty} \le \frac{\rho}{a_1}.$$
Next, using the same trick as above with $x=\eta_2(y)$, we write
\begin{align*}
\int_{\RR}\abs{\xi_1(x)-\xi_2(x)}^2\;dx=\int_{\RR}\abs{\xi_1(\eta_2(y))-\xi_1(\eta_1(y))}^2
\eta_2'(y)\;dy \le \frac{b_2}{a_1^2}\int_{\RR}\abs{\eta_2(y)-\eta_1(y)}^2\;dy \le
\frac{b_1+\rho}{a_1^2}\rho^2,
\end{align*}
since $b_2 \le b_1+\rho$. We conclude that
$$\norm{\xi_1-\xi_2}_{L^2} \le \frac{\sqrt{b_1+\rho}}{a_1}\rho.$$

Now we estimate $\norm{\xi_1'-\xi_2'}_{L^\infty}$. We have
\begin{align*}
\abs{\xi_1'(x)-\xi_2'(x)}&=\abs{\eta_1'(\xi_1(x))^{-1}-\eta_2'(\xi_2(x))^{-1}}\\
&=\frac{1}{\abs{\eta_1'(\xi_1(x))}\abs{\eta_2'(\xi_2(x))}}\abs{\eta_1'(\xi_1(x))-\eta_2'(\xi_2(x))}\\
&\le\frac{1}{a_1a_2}\left(\abs{\eta_1'(\xi_1(x)-\eta_1'(\xi_2(x)))}+\abs{\eta_1'(\xi_2(x))-\eta_2'(\xi_2(x))}\right)\\
&\le\frac{1}{a_1(a_1-\rho)}\left(\omega_1\left(\abs{\xi_1(x)-\xi_2(x)}\right)+\rho\right)\\
&\le\frac{1}{a_1(a_1-\rho)}\left(\omega_1(\rho/a_1)+\rho\right).
\end{align*}
Hence
$$\norm{\xi_1'-\xi_2'}_{L^\infty} \le \frac{1}{a_1(a_1-\rho)}\left(\omega_1(\rho/a_1)+\rho\right).$$

The last estimate $\norm{\xi_1'-\xi_2'}_{L^2}$ is a little bit more complicated than the previous ones. We use heavily the uniform continuity and work directly in terms of the $\epsilon>0$ given above. First, note that we can write
\begin{align*}
\norm{\xi_1'-\xi_2'}^2_{L^2}&=\int_{\RR}\left(\xi_1'(x)-\xi_2'(x)\right)^2\;dx\\
&=\int_{\RR}\left(\frac{1}{\eta_1'(\xi_1(x))}-\frac{1}{\eta_2'(\xi_2(x))}\right)^2\;dx\\
&=\int_{\RR}\frac{1}{\eta_1'(\xi_1(x))^2\eta_2'(\xi_2(x))^2}\left(\eta_1'(\xi_1(x))-\eta_2'(\xi_2(x))\right)^2\;dx\\
&\le\frac{1}{(a_1 a_2)^2}\int_{\RR}\left(\eta_1'(\xi_1(x))-\eta_2'(\xi_2(x))\right)^2\;dx\\
&\le\frac{2}{a_1^2
(a_1-\rho)^2}\left(\int_{\RR}\left(\eta_1'(\xi_2(x))-\eta_2'(\xi_2(x))\right)^2\;dx+\int_{\RR}\left(\eta_1'(\xi_1(x))-\eta_1'(\xi_2(x))\right)^2\;dx\right)\\
\end{align*}
Then for the first integral, we have
\begin{align*}
\int_{\RR}\left(\eta_1'(\xi_2(x))-\eta_2'(\xi_2(x))\right)^2\;dx=\int_{\RR}\left(\eta_1'(y)-\eta_2'(y)\right)^2
\eta_2'(y)\;dy\le b_2 \rho^2 \le (b_1+\rho)\rho^2.
\end{align*}
For the second integral, we use the fact that the function $\eta_1'(x) - 1$ is in $L^2$ from the assumption on $\mathcal{D}(\RR)$. So there exists $M>0$ such that
$$\int_{|x| > M}(\eta_1'(z)-1)^2\;dz<\epsilon^2 \frac{a_1^2(a_1-\rho)^2}{32(2b_1+\rho)}.$$
Now choose $M' = M+2\norm{\xi_1-\id}_{L^{\infty}}$; then $\lvert x\rvert > M'$ implies $\lvert \xi_1(x)\rvert > M$. In addition
if $\rho<\norm{\xi_1-\id}_{L^{\infty}}$ then $\lvert x\rvert > M'$ also implies $\lvert \xi_2(x)\rvert > M$.

Then
\begin{align*}
\int_{\RR}\left(\eta_1'(\xi_1(x))-\eta_1'(\xi_2(x))\right)^2\;dx&=\int_{|x| \le M'}\left(\eta_1'(\xi_1(x))-\eta_1'(\xi_2(x))\right)^2\;dx+\int_{|x|> M'}\left(\eta_1'(\xi_1(x))-\eta_1'(\xi_2(x))\right)^2\;dx\\
&=: (I)+(II).
\end{align*}
Then
\begin{align*}
(II) &\le 2\int_{|x|> M'}(\eta_1'(\xi_1(x))-1)^2\;dx+2\int_{|x|> M'}(\eta_1'(\xi_2(x))-1)^2\;dx\\
&\le 2\int_{|z|> M}(\eta_1'(z)-1)^2 \eta_1'(z)\;dz+2\int_{|z|> M}(\eta_1'(z)-1)^2\eta_2'(z)\;dz\\
&<\frac{\epsilon^2 a_1^2(a_1-\rho)^2}{16}.
\end{align*}
For $(I)$, we use the uniform continuity of $\eta_1'$ directly. That is we can find $\delta>0$ such that if $\abs{\xi_1(x)-\xi_2(x)}<\delta$, then
$$\abs{\eta_1'(\xi_1(x))-\eta_1'(\xi_2(x))}<\frac{\epsilon a_1(a_1-\rho)}{\sqrt{32M'}},$$
for all $x \in \mathbb{R}$. Requiring $\rho$ to be smaller than $\delta$ implies that $(I) \le \frac{\epsilon^2 a_1^2(a_1-\rho)^2}{16}$.

Combining the bounds for $(I)$ and $(II)$, we get $\int_{\RR}\left(\eta_1'(\xi_1(x))-\eta_1'(\xi_2(x))\right)^2\;dx < \frac{\epsilon^2 a_1^2(a_1-\rho)^2}{8}$, and then
$$\norm{\xi_1'-\xi_2'}_{L^2} < \frac{\sqrt{2(b_1+\rho)}}{a_1(a_1-\rho)}\rho+\frac{\epsilon}{2}.$$

Combining the four inequalities above, we see that
\begin{equation}\label{locallipschitzinverse}
\norm{\xi_1-\xi_2}_{1,1}\le\left[\frac{1}{a_1}+\frac{1}{a_1(a_1-\rho)}+\frac{\sqrt{b_1+\rho}}{a_1}+\frac{\sqrt{2(b_1+\rho)}}{a_1(a_1-\rho)}\right]\rho+\frac{\omega_1(\rho/a_1)}{a_1(a_1-\rho)}+\frac{\epsilon}{2}.
\end{equation}
Choosing $\rho$ sufficiently small, the right hand side can be made less than $\epsilon$. This proves the continuity of the inversion map.
\end{proof}

Note that in the periodic case the same proof applies, except we do not need to estimate the $L^2$ tail. On the other hand the estimate \eqref{locallipschitzinverse} still requires us to use the modulus of continuity $\omega_1$ of the fixed diffeomorphism $\eta_1$, and thus
we do not get \emph{uniform} continuity of the inversion map. This is responsible for the failure of the Camassa-Holm solution map to be uniformly continuous in the data even in spaces with higher smoothness, as mentioned in the Introduction. 

It remains to prove the continuity of the composition mapping. We will prove the following lemma which is a more general statement than the continuity of composition of diffeomorphisms. That is, we will prove that the the composition mapping of a vector $\phi \in \mathcal{V}_1(\RR)$ and a diffeomorphism $\eta \in \mathcal{D}(\RR)$ is continuous. Then the composition mapping of two diffeomorphisms will be continuous as a consequence. Furthermore, this type of composition will appear later in the argument of local well-posedness.

\begin{lemma}\label{composition1}
The map $\mathrm{Comp}^1:\mathcal{V}_1(\RR) \times \mathcal{D}(\RR) \to \mathcal{V}_1(\RR)$ given by $(\phi,\eta)\mapsto \phi\circ\eta$ is continuous.
\end{lemma}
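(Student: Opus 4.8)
The plan is to establish joint continuity at an arbitrary fixed point $(\phi_1,\eta_1)$, so let $(\phi_2,\eta_2)$ be a nearby point and set $\sigma:=\norm{\phi_1-\phi_2}_{1,1}$ and $\tau:=\norm{\eta_1-\eta_2}_{1,1}$. Write $a_1\le\eta_1'\le b_1$, $M_1:=\norm{\phi_1'}_{L^\infty}$, and recall from the proof of Lemma~\ref{topgroup} that the corresponding bounds for $\eta_2$ satisfy $a_2\ge a_1-\tau$ and $b_2\le b_1+\tau$. I want to bound the four pieces of $\norm{w}_{1,1}$, where $w:=\phi_1\circ\eta_1-\phi_2\circ\eta_2$, in terms of $\sigma$ and $\tau$. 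The basic device is the telescoping decomposition separating the outer perturbation from the inner one,
$$w=\bigl(\phi_1\circ\eta_1-\phi_1\circ\eta_2\bigr)+\bigl((\phi_1-\phi_2)\circ\eta_2\bigr).$$
The second summand is handled purely by the change of variables $y=\eta_2(x)$ together with $\eta_2'\ge a_2$, giving $\norm{(\phi_1-\phi_2)\circ\eta_2}_{L^2}\le\sigma/\sqrt{a_2}$ and, trivially, $\norm{(\phi_1-\phi_2)\circ\eta_2}_{L^\infty}\le\sigma$. The first summand is controlled by the mean value inequality and $\norm{\phi_1'}_{L^\infty}\le M_1$: pointwise $\abs{\phi_1(\eta_1(x))-\phi_1(\eta_2(x))}\le M_1\abs{\eta_1(x)-\eta_2(x)}\le M_1\tau$, which bounds both the $L^\infty$ norm (by $M_1\tau$) and the $L^2$ norm (by $M_1\norm{\eta_1-\eta_2}_{L^2}\le M_1\tau$). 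Thus the zeroth-order part of $\norm{w}_{1,1}$ is linear in $\sigma$ and $\tau$.

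The derivative estimates carry the real content. Using $(\phi\circ\eta)'=(\phi'\circ\eta)\,\eta'$ I would split
$$w'=\underbrace{\bigl[(\phi_1'\circ\eta_1)-(\phi_1'\circ\eta_2)\bigr]\eta_1'}_{(A)}+\underbrace{(\phi_1'\circ\eta_2)(\eta_1'-\eta_2')}_{(B)}+\underbrace{\bigl[(\phi_1'-\phi_2')\circ\eta_2\bigr]\eta_2'}_{(C)}.$$
Terms $(B)$ and $(C)$ are routine. Using $\abs{\phi_1'\circ\eta_2}\le M_1$, term $(B)$ is bounded in $L^\infty$ by $M_1\norm{\eta_1'-\eta_2'}_{L^\infty}\le M_1\tau$ and in $L^2$ by $M_1\norm{\eta_1'-\eta_2'}_{L^2}\le M_1\tau$. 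Term $(C)$ is again handled by the change of variables $y=\eta_2(x)$; since the Jacobian carries an extra factor $\eta_2'\le b_2$, one gets $\norm{(C)}_{L^2}\le\sqrt{b_2}\,\sigma\le\sqrt{b_1+\tau}\,\sigma$, while $\norm{(C)}_{L^\infty}\le b_2\sigma$. For the $L^\infty$ norm of the remaining term $(A)$, I invoke the uniform continuity of $\phi_1'$ guaranteed by Lemma~\ref{limitatinfty} (applicable because $\phi_1\in\mathcal{V}_1(\RR)$): writing $\omega$ for its modulus of continuity, $\abs{(A)}\le b_1\,\omega(\abs{\eta_1(x)-\eta_2(x)})\le b_1\,\omega(\tau)$, which tends to $0$ as $\tau\to0$.

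The main obstacle is the $L^2$ norm of term $(A)$, which is the precise analogue of the last and hardest estimate in Lemma~\ref{topgroup}: after extracting $\eta_1'\le b_1$ I must bound $\int_{\RR}\bigl(\phi_1'(\eta_1(x))-\phi_1'(\eta_2(x))\bigr)^2\,dx$, and here the pointwise modulus bound is useless over the whole line. The remedy is to split $\RR=\set{\abs{x}\le M'}\cup\set{\abs{x}>M'}$. On the far region I use $(\phi_1'(\eta_1)-\phi_1'(\eta_2))^2\le 2\phi_1'(\eta_1)^2+2\phi_1'(\eta_2)^2$ and the change of variables $y=\eta_i(x)$ to reduce each piece to $\tfrac{2}{a_i}\int_{\abs{y}>M}\phi_1'(y)^2\,dy$, which can be made smaller than any prescribed quantity by taking $M$ large, since $\phi_1'\in L^2(\RR)$; the threshold $M'$ is chosen, exactly as in Lemma~\ref{topgroup}, depending on $M$ and $\norm{\eta_1-\id}_{L^\infty}$ so that $\abs{x}>M'$ forces $\abs{\eta_1(x)},\abs{\eta_2(x)}>M$. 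On the bounded region the uniform continuity of $\phi_1'$ makes the integrand smaller than any $\epsilon'^2$ once $\tau$ is small, and integrating over an interval of length $2M'$ contributes at most $2M'\epsilon'^2$, again controlled by shrinking $\tau$.

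Collecting the four estimates, each contribution tends to $0$ as $\sigma,\tau\to0$, so $\norm{w}_{1,1}\to0$, which proves joint continuity of $\mathrm{Comp}^1$. Finally, continuity of the composition of two diffeomorphisms follows as a corollary: for $\psi,\eta\in\mathcal{D}(\RR)$ one writes $\psi\circ\eta-\id=(\psi-\id)\circ\eta+(\eta-\id)$, where $\psi-\id\in\mathcal{V}_1(\RR)$ by the Remark, and the first term is continuous in $(\psi,\eta)$ by the lemma just proved while the second is trivially continuous.
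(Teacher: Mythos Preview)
Your proof is correct and follows essentially the same approach as the paper: the same telescoping into ``outer'' and ``inner'' perturbations, the mean-value bound and change of variables for the easy pieces, the modulus of continuity of $\phi_1'$ for the $L^\infty$ estimate of the hard term, and the near/far splitting (as in Lemma~\ref{topgroup}) for its $L^2$ estimate. The only cosmetic differences are the order of intermediate terms in your three-term decomposition of $w'$ (you insert $\phi_1'(\eta_2)\eta_1'$ where the paper inserts $\phi_1'(\eta_1)\eta_2'$) and your use of $a_2,b_2$ in a couple of constants where the paper uses $a_1,b_1$; neither affects the argument.
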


\begin{proof}
Let $(\phi_i,\eta_i) \in \mathcal{V}_1 \times \mathcal{D}$ for $i=1,2$. Then $a_i \le
\eta_i'(x) \le b_i$ with inverses $\xi_i$ for $\eta_i$ and we
have the bounds $|\phi_i'(x)| \le C_i$ on $\RR$. In addition, there is a modulus of continuity $\omega_1$ since $\phi_1'$ is uniformly continuous as before. We claim that we can control the norm $\norm{\phi_1 \circ \eta_1-\phi_2 \circ \eta_2}_{1,1}$ in terms of $\rho:=\norm{\eta_1-\eta_2}_{1,1}$ and $\sigma:=\norm{\phi_1-\phi_2}_{1,1}$. First, note that $\abs{\phi_1(\eta_1(x))-\phi_2(\eta_2(x))} \le \abs{\phi_1(\eta_1(x))-\phi_1(\eta_2(x))}+\abs{\phi_1(\eta_2(x))-\phi_2(\eta_2(x))}$. By the Mean Value Theorem and the bound for $\phi_1'$,
$$\abs{\phi_1(\eta_1(x))-\phi_1(\eta_2(x))} \le C_1\abs{\eta_1(x)-\eta_2(x)} \le C_1 \norm{\eta_1-\eta_2}_{L^\infty}.$$
Also, $\abs{\phi_1(\eta_2(x))-\phi_2(\eta_2(x))} \le \norm{\phi_1-\phi_2}_{L^\infty}$. Hence,
$$\norm{\phi_1 \circ \eta_1-\phi_2 \circ \eta_2}_{L^\infty} \le C_1 \norm{\eta_1-\eta_2}_{L^\infty}+\norm{\phi_1-\phi_2}_{L^\infty} \le C_1\rho+\sigma.$$

Next,
\begin{align*}
\abs{(\phi_1 \circ \eta_1)'-(\phi_2 \circ \eta_2)'} &=\abs{\phi_1'(\eta_1(x))\eta_1'(x)-\phi_2'(\eta_2(x))\eta_2'(x)}\\
&\le\abs{\phi_1'(\eta_1(x))\eta_1'(x)-\phi_1'(\eta_1(x))\eta_2'(x)}\\
&\;\;\;\;\;+\abs{\phi_1'(\eta_1(x))\eta_2'(x)-\phi_1'(\eta_2(x))\eta_2'(x)}\\
&\;\;\;\;\;+\abs{\phi_1'(\eta_2(x))\eta_2'(x)-\phi_2'(\eta_2(x))\eta_2'(x)}\\
&=: (I)+(II)+(III).
\end{align*}
Then
\begin{align*}
(I) &\le C_1\abs{\eta_1'(x)-\eta_2'(x)}\le C_1 \norm{\eta_1'-\eta_2'}_{L^\infty},\\
(II) &\le b_2\abs{\phi_1'(\eta_1(x))-\phi_1'(\eta_2(x))}\le b_2 \omega_1(|\eta_1(x)-\eta_2(x)|)\le b_2 \omega_1(\norm{\eta_1-\eta_2}_{L^\infty}),\\
(III) &\le b_2\abs{\phi_1'(\eta_2(x))-\phi_2'(\eta_2(x))}\le b_2 \norm{\phi_1'-\phi_2'}_{L^\infty}.
\end{align*}
Hence,
$$\norm{(\phi_1 \circ \eta_1)'-(\phi_2 \circ \eta_2)'}_{L^\infty} \le (C_1+b_2) \rho+b_2 \omega_1(\rho).$$

For $\norm{\phi_1 \circ \eta_1-\phi_2 \circ \eta_2}_{L^2}$, we have
$$\norm{\phi_1 \circ \eta_1-\phi_2 \circ \eta_2}_{L^2} \le \norm{\phi_1 \circ \eta_1-\phi_2 \circ \eta_1}_{L^2}+\norm{\phi_2 \circ \eta_1-\phi_2 \circ \eta_2}_{L^2},$$
by the triangle inequality. Then
\begin{align*}
\norm{\phi_1 \circ \eta_1-\phi_2 \circ \eta_1}_{L^2}^2=\int_{\RR}\left(\phi_1(\eta_1(x))-\phi_2(\eta_1(x))\right)^2\;dx=\int_{\RR}\left(\phi_1(y)-\phi_2(y)\right)^2\xi_1'(y)\;dy\le\frac{1}{a_1}\norm{\phi_1-\phi_2}_{L^2}^2,
\end{align*}
and
\begin{align*}
\norm{\phi_2 \circ \eta_1-\phi_2 \circ \eta_2}_{L^2}^2=\int_{\RR}\left(\phi_2(\eta_1(x))-\phi_2(\eta_2(x))\right)^2\;dx \le C_2^2\int_{\RR}(\eta_1(x)-\eta_2(x))^2\;dx=C_2^2 \norm{\eta_1-\eta_2}_{L^2}^2.
\end{align*}
Hence,
$$\norm{\phi_1 \circ \eta_1-\phi_2 \circ \eta_2}_{L^2} \le \frac{1}{\sqrt{a_1}}\sigma+C_2\rho.$$

Lastly, we estimate $\norm{(\phi_1 \circ \eta_1)'-(\phi_2 \circ \eta_2)'}_{L^2}$. Note that
\begin{align*}
\norm{\phi_1'(\eta_1(x))\eta_1'(x)-\phi_2'(\eta_2(x))\eta_2'(x)}_{L^2}&\le\norm{\phi_1'(\eta_1(x))\eta_1'(x)-\phi_1'(\eta_1(x))\eta_2'(x)}_{L^2}\\
&\;\;\;\;\;+\norm{\phi_1'(\eta_1(x))\eta_2'(x)-\phi_1'(\eta_2(x))\eta_2'(x)}_{L^2}\\
&\;\;\;\;\;+\norm{\phi_1'(\eta_2(x))\eta_2'(x)-\phi_2'(\eta_2(x))\eta_2'(x)}_{L^2}\\
&=:(I)+(II)+(III).
\end{align*}
By using a similar technique, we can find $(I) \le C_1 \rho$\; and $(III) \le \frac{b_2}{\sqrt{a_2}}\sigma$. For $(II)$, we use the method of splitting the integral into two parts and then using the uniform continuity as in the proof of Lemma 6. Here, the situation is slightly simpler since $\phi_1'$ itself is in $L^2$. So for any $\epsilon>0$ we have
$$\norm{(\phi_1 \circ \eta_1)'-(\phi_2 \circ \eta_2)'}_{L^2}<\frac{\epsilon}{2}$$
if $\rho$ and $\sigma$ are sufficiently small.

Now combining the four inequalities, we obtain
\begin{align*}
\norm{\eta_1 \circ \phi_1-\eta_2 \circ
\phi_2}_{1,1} \le \left(1+\frac{1}{\sqrt{a_1}}+b_1+\rho\right)\sigma+\left(3C_1+\sigma\right)\rho+b\omega_1(\rho)+\frac{\epsilon}{2}.
\end{align*}
Then we can make the right side as small as we want by choosing
$\sigma$ and $\rho$ sufficiently small; hence $\eta \circ \phi$ is continuous as a function of
$(\eta,\phi)$ in the product metric.
\end{proof}

Again we note that this map is not uniformly continuous. 

\begin{corollary}\label{composition2}
The map $\mathrm{Comp}^2:\mathcal{D}(\RR) \times \mathcal{D}(\RR) \to \mathcal{D}(\RR)$ is continuous.
\end{corollary}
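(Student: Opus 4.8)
The plan is to reduce $\mathrm{Comp}^2$ to the mixed composition $\mathrm{Comp}^1$ already treated in Lemma \ref{composition1}. The essential observation is that every diffeomorphism in $\mathcal{D}(\RR)$ differs from the identity by a vector field in $\mathcal{V}_1(\RR)$. Indeed, given $\phi \in \mathcal{D}(\RR)$ and setting $v:=\phi-\id$, the function $v$ is $C^1$ with $\abs{v'}=\abs{\phi'-1}$ bounded on $\RR$, both $v$ and $v'$ lie in $L^2$ by condition (2) in the definition of $\mathcal{D}(\RR)$, and $v'(x)=\phi'(x)-1\to 0$ as $\abs{x}\to\infty$ by condition (3); hence $v\in\mathcal{V}_1(\RR)$. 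Since the distance on $\mathcal{D}(\RR)$ satisfies $D(\phi_1,\phi_2)=\norm{(\phi_1-\id)-(\phi_2-\id)}_{1,1}$, the translation $\phi\mapsto\phi-\id$ is an isometry from $(\mathcal{D}(\RR),D)$ into $(\mathcal{V}_1(\RR),\norm{\,\cdot\,}_{1,1})$.

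I would then decompose the composition as
$$\phi\circ\eta-\id=(\eta-\id)+(\phi-\id)\circ\eta.$$
Because $\mathcal{D}(\RR)$ is a group, $\phi\circ\eta\in\mathcal{D}(\RR)$, so both sides lie in $\mathcal{V}_1(\RR)$, and continuity of $\mathrm{Comp}^2$ in the distance $D$ is equivalent, via the isometry of the first paragraph, to continuity of the right-hand side as a $\mathcal{V}_1(\RR)$-valued function of $(\phi,\eta)$.

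Each term on the right is continuous. The map $\eta\mapsto\eta-\id$ is continuous (indeed isometric) by the first paragraph, giving continuity of the first term. For the second term, the map $(\phi,\eta)\mapsto(\phi-\id,\eta)$ is continuous from $\mathcal{D}(\RR)\times\mathcal{D}(\RR)$ into $\mathcal{V}_1(\RR)\times\mathcal{D}(\RR)$, and composing it with $\mathrm{Comp}^1$ from Lemma \ref{composition1} shows that $(\phi,\eta)\mapsto(\phi-\id)\circ\eta$ is continuous into $\mathcal{V}_1(\RR)$. Since addition is continuous in the normed space $\mathcal{V}_1(\RR)$, the sum of the two terms depends continuously on $(\phi,\eta)$; transporting back through $v\mapsto\id+v$ then yields continuity of $\mathrm{Comp}^2$.

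The argument is formal once Lemma \ref{composition1} is available, and the only point that genuinely needs care---the step I would flag as the main (if modest) obstacle---is the verification in the first paragraph that $\phi-\id\in\mathcal{V}_1(\RR)$ for every $\phi\in\mathcal{D}(\RR)$, i.e.\ that the two $L^2$ conditions and the asymptotic condition on the derivative transfer correctly, so that Lemma \ref{composition1} applies with its first argument ranging precisely over the translated image of $\mathcal{D}(\RR)$.
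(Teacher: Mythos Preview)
Your proof is correct and follows essentially the same approach as the paper: both reduce to Lemma~\ref{composition1} via the identity $\phi\circ\eta=(\phi-\id)\circ\eta+\eta$, using that $\phi-\id\in\mathcal{V}_1(\RR)$ and that the remaining term is the (continuous) projection onto the second factor. Your write-up is slightly more explicit about why $\phi-\id$ lands in $\mathcal{V}_1(\RR)$ and about the isometry between the metrics, but the argument is the same.
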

\begin{proof}
Let $(\xi,\eta) \in \mathcal{D} \times \mathcal{D}$. Then $\phi:=\xi-\mathrm{Id}$ is an element of $\mathcal{V}_1$. From Lemma \ref{composition1}, the mapping $(\phi, \eta)\mapsto \phi \circ \eta$ is continuous. Note that
$$\phi \circ \eta=(\xi-\mathrm{Id})\circ \eta=\xi \circ \eta-\eta,$$
and so we can identify $\mathrm{Comp}^2=\mathrm{Comp}^1+\mathrm{Proj}_2$, where $\mathrm{Proj}_2$ is the projection map onto the second component. Since $\mathrm{Comp}^2$ is the sum of two continuous mappings, it is continuous.
\end{proof}

Therefore, $\mathcal{D}(\RR)$ is a topological group.

\section{Local existence}
It is convenient to rewrite the Camassa-Holm equation in its equivalent form
\begin{equation}\label{CH2}
u_t+uu_x=-\partial_x(1-\partial_x^2)^{-1}\left(u^2+\frac{1}{2}u_x^2\right).
\end{equation}
Note that the operator $(1-\partial_x^2)$ is invertible since we can can check, for a bounded function $g$, that
\begin{equation}\label{lambdainverse}
f(x)-f''(x)=g(x)\;\text{and}\lim_{|x|\to \infty}f(x)=0 \implies
f(x)=\frac{1}{2}\int_{\RR}e^{-|x-y|}g(y)dy.
\end{equation}

Now, consider the Lagrangian flow equation
\begin{equation}\label{floweq}
\frac{\partial \eta}{\partial t}(t,x)=u\left(t,\eta(t,x)\right).
\end{equation}

Since $u$ is in $\mathcal{V}_1(\RR)$, the function $\phi:=u^2+\frac{1}{2}u_x^2$ in the parenthesis of \eqref{CH2} is continuous, integrable, and decaying to zero at infinity. Then this implies that the function $\phi$ is also square integrable. So we will define the collection of such functions $\phi$ as $\mathcal{V}_0$ below and proceed with the local existence in the space $\mathcal{V}_0(\RR)$. Then we will realize $\phi$ as the function $u^2+\frac{1}{2}u_x^2$ to finish the proof of local well-posedness.

\begin{definition}
We denote by $\mathcal{V}_0(\RR)$ the set of maps $u:\RR \to \RR$ satisfying the
conditions
\begin{enumerate}
\item $u(x)$ is a continuous function with ${\displaystyle \lim_{|x| \to \infty}u(x)=0}$, and
\item $\int_{\RR}\abs{u(x)}^2 dx<\infty$.
\end{enumerate}
\end{definition}

So $\mathcal{V}_0=C^0_0 \cap L^2$, where $C^0_0$ denotes the space of continuous functions that decay to zero at infinity. Hence the norm in $\mathcal{V}_0$ is the sum of $L^\infty$ and $L^2$ norms. Now we can write the equation \eqref{CH2} in terms of $\eta$.

\begin{proposition}
The equation \eqref{CH2} can be rewritten in terms of the flow $\eta$ as
\begin{equation}\label{lagrangian}
\eta_{tt}=\mathcal{L}_\eta\left(\eta_t^2+\frac{\eta_{tx}^2}{2\eta_x^2}\right),
\end{equation}
where $\mathcal{L}$ is defined by
\begin{equation}\label{Lfunction}
\mathcal{L}_\eta(\phi)=\mathcal{L}(\phi \circ \eta^{-1}) \circ
\eta,\;\text{and}\;\mathcal{L}=\partial_x(1-\partial_x^2)^{-1}\;\text{for any function $\phi$.}
\end{equation}
\end{proposition}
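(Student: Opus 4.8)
The plan is to obtain \eqref{lagrangian} by differentiating the flow equation \eqref{floweq} and transporting \eqref{CH2} into Lagrangian coordinates; the whole argument is an application of the chain rule once the operator $\mathcal{L}_\eta$ is unwound. First I would differentiate $\eta_t(t,x) = u(t,\eta(t,x))$ once more in $t$, using the chain rule and then substituting $\eta_t = u\circ\eta$ back in:
\[
\eta_{tt} = u_t(t,\eta) + u_x(t,\eta)\,\eta_t = \bigl(u_t + u u_x\bigr)\circ\eta .
\]
Thus the left-hand side of \eqref{lagrangian} is exactly the material derivative $u_t + u u_x$ transported along the flow, and by \eqref{CH2} it equals $-\mathcal{L}\bigl(u^2 + \tfrac12 u_x^2\bigr)\circ\eta$.

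Second, I would rewrite the bracketed expression on the right-hand side of \eqref{lagrangian} in Eulerian terms. Differentiating \eqref{floweq} in $x$ gives $\eta_{tx} = u_x(t,\eta)\,\eta_x$, and since $\eta\in\mathcal{D}(\RR)$ guarantees $\eta_x \ge a > 0$ we may divide to get $u_x\circ\eta = \eta_{tx}/\eta_x$. Combining this with $\eta_t^2 = (u\circ\eta)^2 = u^2\circ\eta$ collapses the bracket to
\[
\eta_t^2 + \frac{\eta_{tx}^2}{2\eta_x^2} = \Bigl(u^2 + \tfrac12 u_x^2\Bigr)\circ\eta = \phi\circ\eta, \qquad \phi := u^2 + \tfrac12 u_x^2 .
\]

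Third, I would unwind the definition \eqref{Lfunction}. Because $\eta$ is a genuine $C^1$ diffeomorphism whose inverse again lies in $\mathcal{D}(\RR)$ (established when we showed $\mathcal{D}(\RR)$ is a group), pre-composing $\phi\circ\eta$ with $\eta^{-1}$ simply recovers $\phi$, so
\[
\mathcal{L}_\eta\bigl(\phi\circ\eta\bigr) = \mathcal{L}\bigl((\phi\circ\eta)\circ\eta^{-1}\bigr)\circ\eta = \mathcal{L}(\phi)\circ\eta = \Bigl[\partial_x(1-\partial_x^2)^{-1}\bigl(u^2 + \tfrac12 u_x^2\bigr)\Bigr]\circ\eta .
\]
Comparing this with the expression for $\eta_{tt}$ from the first step then yields \eqref{lagrangian}.

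None of the individual steps is hard; the points that need genuine care are bookkeeping rather than analysis. I would make sure the sign conventions in \eqref{CH2} and in the definition $\mathcal{L} = \partial_x(1-\partial_x^2)^{-1}$ are matched consistently when comparing the two expressions for $\eta_{tt}$. I would also verify that every operation lives in an admissible function space: since $u\in\mathcal{V}_1(\RR)$, the function $\phi = u^2 + \tfrac12 u_x^2$ lies in $\mathcal{V}_0(\RR)$ (continuous, decaying, and in $L^2$), so $(1-\partial_x^2)^{-1}$ acts on it through the explicit kernel \eqref{lambdainverse}, and because $\eta,\eta^{-1}\in\mathcal{D}(\RR)$ the pre- and post-compositions are well defined and mutually inverse. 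The only real subtlety, such as it is, is confirming that $\mathcal{L}_\eta$ is defined precisely so that conjugating $\mathcal{L}$ by composition with $\eta$ reproduces the transported nonlinearity, which is exactly the identity $\mathcal{L}_\eta(\phi\circ\eta) = \mathcal{L}(\phi)\circ\eta$ used in the third step.
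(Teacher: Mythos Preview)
Your proposal is correct and mirrors the paper's own argument: differentiate the flow equation in $t$ to get $\eta_{tt}=(u_t+uu_x)\circ\eta$, differentiate in $x$ to get $u_x\circ\eta=\eta_{tx}/\eta_x$, and then recognize the right-hand side of \eqref{CH2} as $\mathcal{L}$ applied to $u^2+\tfrac12 u_x^2$ conjugated by $\eta$. Your explicit unwinding of $\mathcal{L}_\eta(\phi\circ\eta)=\mathcal{L}(\phi)\circ\eta$ and your remarks on function-space membership are slightly more detailed than the paper's ``writing everything in terms of operators,'' but the route is the same; your flagged sign check is prudent, since as stated \eqref{CH2} and \eqref{lagrangian} differ by a sign with $\mathcal{L}=\partial_x(1-\partial_x^2)^{-1}$.
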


\begin{proof}
We differentiate the equation \eqref{floweq} with respect to $t$ and use the chain rule. We obtain
$$\frac{\partial^2 \eta}{\partial t^2}(t,x)=\frac{\partial u}{\partial t}(t,\eta(t,x))+\frac{\partial u}{\partial x}(t,\eta(t,x))\frac{\partial \eta}{\partial x}(t,x)=\frac{\partial u}{\partial t}(t,\eta(t,x))+u(t,\eta(t,x))\frac{\partial u}{\partial x}(t,\eta(t,x)),$$
which is the left hand side of equation \eqref{CH2} composed with $\eta$. Also,
$$\frac{\partial^2 \eta}{\partial t \partial x}(t,x)=\frac{\partial u}{\partial x}(t,\eta(t,x))\frac{\partial \eta}{\partial x}(t,x),$$
so we get $u_x \circ \eta=\frac{\eta_{tx}}{\eta_x}$. Now, if $p-p_{xx}=u^2+\frac{1}{2}u_x^2$, then the equation \eqref{CH2} becomes
$$\eta_{tt}=-p_x \circ \eta.$$
After taking the inverse flow $\eta^{-1}$ to set the variables at the right place and writing everything in terms of operators, we get the equation \eqref{lagrangian}.
\end{proof}

The first thing we notice is that as a function of $(\eta,V)=(\eta,\eta_t)$, the right side of equation
\eqref{lagrangian} does not lose derivatives. If $\eta$ and $V$ are both $C^1$, then the term inside
parentheses is continuous, while $\mathcal{L}$ gains a derivative so that if $\phi \in C^0$, then
$\mathcal{L}_\eta(\phi) \in C^1$. Hence, the equation \eqref{lagrangian} becomes a first-order equation on an
open subset of the Banach space $\mathcal{D}(\RR) \times \mathcal{V}_1(\RR)$. We claim
that the right side function is $C^1$ in the $(\eta,V)$ variables. We begin with the following lemma.

\begin{lemma}\label{boundedlinear}
For each $\eta \in \mathcal{D}(\RR)$, $\mathcal{L}_\eta:\mathcal{V}_0(\RR) \to \mathcal{V}_1(\RR)$ is a bounded linear operator.
\end{lemma}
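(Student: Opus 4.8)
The plan is to reduce the statement to two ingredients: that the flat operator $\mathcal{L}=\partial_x(1-\partial_x^2)^{-1}$ maps $\mathcal{V}_0(\RR)$ boundedly into $\mathcal{V}_1(\RR)$, and that pre- and post-composition by the fixed diffeomorphism $\eta$ and its inverse $\xi=\eta^{-1}$ preserve the relevant spaces with norm bounds controlled by the constants $a,b$ satisfying $a\le\eta'\le b$. Linearity of $\mathcal{L}_\eta$ is immediate, since composition on either side and $\mathcal{L}$ itself are all linear; so the entire content is the operator-norm estimate together with the verification that the image actually lies in $\mathcal{V}_1(\RR)$.

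For the core operator I would first make $\mathcal{L}$ explicit using \eqref{lambdainverse}. Writing $h=(1-\partial_x^2)^{-1}g$, so that $h-h''=g$ and $h(x)=\frac12\int_\RR e^{-\abs{x-y}}g(y)\,dy$, we obtain $\mathcal{L}g=h'=K*g$ with $K(x)=-\frac12\sgn(x)e^{-\abs{x}}$, and the key identity $(\mathcal{L}g)'=h''=h-g=G*g-g$ with $G(x)=\frac12 e^{-\abs{x}}$. Since $\norm{K}_{L^1}=\norm{G}_{L^1}=1$ and $\norm{K}_{L^2}=\norm{G}_{L^2}=\frac12$, Young's inequality delivers all four estimates at once: $\norm{\mathcal{L}g}_{L^\infty}\le\frac12\norm{g}_{L^2}$, $\norm{\mathcal{L}g}_{L^2}\le\norm{g}_{L^2}$, $\norm{(\mathcal{L}g)'}_{L^\infty}\le\norm{g}_{L^\infty}+\frac12\norm{g}_{L^2}$, and $\norm{(\mathcal{L}g)'}_{L^2}\le 2\norm{g}_{L^2}$. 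Thus $\norm{\mathcal{L}g}_{1,1}\le C\norm{g}_{\mathcal{V}_0}$ for an absolute constant $C$, and $\mathcal{L}g$ is $C^1$ because $K*g$ and $G*g-g$ are continuous. That $\mathcal{L}g\to 0$ at infinity then follows from Lemma \ref{limitatinfty} applied to $f=\mathcal{L}g$, which is $C^1$ with bounded derivative and in $L^2$.

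For the conjugation I would track the three maps in turn. Pre-composition keeps $\phi\circ\xi$ in $\mathcal{V}_0(\RR)$: it is continuous and decays since $\xi$ is a homeomorphism of $\RR$, and the change of variables $x=\eta(y)$ gives $\norm{\phi\circ\xi}_{L^2}^2\le b\,\norm{\phi}_{L^2}^2$ while $\norm{\phi\circ\xi}_{L^\infty}=\norm{\phi}_{L^\infty}$. Post-composition sends $\psi:=\mathcal{L}(\phi\circ\xi)\in\mathcal{V}_1(\RR)$ to $\psi\circ\eta\in\mathcal{V}_1(\RR)$: writing $(\psi\circ\eta)'=\psi'(\eta)\,\eta'$ and using $a\le\eta'\le b$ with the same change of variables (exactly as in the proof that $\mathcal{D}(\RR)$ is closed under composition) yields $\norm{\psi\circ\eta}_{L^\infty}=\norm{\psi}_{L^\infty}$, $\norm{(\psi\circ\eta)'}_{L^\infty}\le b\,\norm{\psi'}_{L^\infty}$, $\norm{\psi\circ\eta}_{L^2}\le a^{-1/2}\norm{\psi}_{L^2}$, and $\norm{(\psi\circ\eta)'}_{L^2}\le\sqrt{b}\,\norm{\psi'}_{L^2}$. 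Chaining the three steps produces $\norm{\mathcal{L}_\eta(\phi)}_{1,1}\le C(\eta)\norm{\phi}_{\mathcal{V}_0}$ with a constant depending only on $a,b$.

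The main obstacle is not the norm bounds, which are routine, but the qualitative decay of the \emph{derivative} at infinity needed for membership in $\mathcal{V}_1(\RR)$. One cannot invoke Lemma \ref{limitatinfty} for $(\mathcal{L}g)'$ because $g$ is merely continuous and $g'$ need not exist; I would instead argue directly from $(\mathcal{L}g)'=G*g-g$. The term $g$ vanishes at infinity by hypothesis, and $G*g\to 0$ because, splitting $\int_\RR=\int_{\abs{y}\le R}+\int_{\abs{y}>R}$, the tail is bounded by $\norm{G}_{L^1}\sup_{\abs{y}>R}\abs{g(y)}$ while the compact part vanishes as $\abs{x}\to\infty$ since the unit-mass kernel $G(x-\cdot)$ escapes any fixed compact set. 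Finally $(\psi\circ\eta)'=\psi'(\eta)\,\eta'\to 0$ since $\eta(x)\to\pm\infty$ forces $\psi'(\eta(x))\to0$ while $\eta'$ stays bounded, and $\phi\circ\xi$ inherits decay from $\phi$; with these the composition remains inside $\mathcal{V}_1(\RR)$ and the proof is complete.
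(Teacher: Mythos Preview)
Your argument is correct. The factorization $\mathcal{L}_\eta=R_\eta\circ\mathcal{L}\circ R_\xi$ together with the convolution representation $\mathcal{L}g=K\ast g$, $(\mathcal{L}g)'=G\ast g-g$ and Young's inequality gives all four norm bounds cleanly, and you correctly flag and resolve the one genuinely nontrivial point, namely the decay of $(\mathcal{L}g)'$ at infinity, which cannot come from Lemma~\ref{limitatinfty} since $g$ is merely continuous.

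The paper proceeds differently. It writes $\mathcal{L}_\eta(\phi)$ out as a single explicit integral in the $\eta$-variables and bounds the $L^\infty$ and $C^1$ norms directly by using $-\eta(x)+\eta(y)\le -a(x-y)$ on each half-line. For the $H^1$ estimate it does not use Young's inequality at all: instead it observes that $f=q\circ\eta$ with $q-q''=\psi'$, multiplies by $q$, integrates by parts, and reads off $\norm{q}_{H^1}\le\norm{\psi}_{L^2}$; the $H^1$ bound on $f$ then follows by change of variables. Your approach is more modular and makes the role of the conjugation transparent, while the paper's energy argument is slightly sharper for the $H^1$ part (constant $1$ rather than $\sqrt{5}$) and avoids splitting into three maps. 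One point worth noting is that the paper is silent on the qualitative condition $f'(x)\to 0$ required for membership in $\mathcal{V}_1(\RR)$; your explicit treatment of $G\ast g\to 0$ via the tail-splitting argument actually fills a small gap that the paper glosses over.
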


\begin{proof}
By formula \eqref{lambdainverse} we can write \eqref{lagrangian} as
\begin{equation}\label{L}
\mathcal{L}(\phi)(x)=-\frac{1}{2}e^{-x}\int_{-\infty}^x e^z \phi(z)dz+\frac{1}{2}e^x \int_{x}^\infty
e^{-z}\phi(z)\;dz
\end{equation}
and thus
$$\mathcal{L}_\eta (\phi)(x)=-\frac{1}{2}e^{-\eta(x)}\int_{-\infty}^x
e^{\eta(y)}\phi(y)\eta'(y)\;dy+\frac{1}{2}e^{\eta(x)}\int_{x}^{\infty}e^{-\eta(y)}\phi(y)\eta'(y)\;dy=:f(x).$$
We need to show that $\norm{f}_{1,1}$ is bounded by $\norm{\phi}_{L^\infty}$ and $\norm{\phi}_{H^1}$. Since $\eta \in \mathcal{D}(\RR)$ and $\phi \in \mathcal{V}_0(\RR)$, we have $a \le \eta' \le b$ and $-M\le \phi \le M$ for some constants $a,b,M>0$. Since $\eta$ is
strictly increasing,
$$\left\{
  \begin{array}{l l}
     -\eta(x)+\eta(y)\le -a(x-y) & \quad \text{if $-\infty<y<x$,}\\
      \eta(x)-\eta(y)\le -a(y-x) & \quad \text{if $x<y<\infty$.}
   \end{array} \right.$$
Hence,
\begin{align*}
\abs{f(x)}&\le\frac{Mb}{2}\left(\int_{-\infty}^x e^{-a(x-y)}\;dy+\int_x^{\infty} e^{-a(y-x)}\;dy\right)=\frac{Mb}{a}.
\end{align*}
So $\norm{f}_{L^\infty} \le \frac{b}{a}\norm{\phi}_{L^\infty}$. Similarly, we can find $\norm{f'}_{L^\infty} \le \left(\frac{b^2}{a}+b\right)\norm{\phi}_{L^\infty}$.

Note that by definition \eqref{Lfunction} we can write $f(x)=q(\eta(x))$ where
$q$ satisfies the differential equation $q(x)-q''(x)=\psi'(x)$, with $\psi=\phi\circ\xi$ and $\xi=\eta^{-1}$. Multiplying by $q(x)$ on both sides and
integrating on $\mathbb{R}$, we get
$$\int_{-\infty}^\infty q(z)^2+q'(z)^2\;dz=\int_{-\infty}^\infty \psi'(z)q(z)\;dz = -\int_{-\infty}^{\infty} \psi(z)q'(z)\;dz.$$
This implies that
$\left\Vert q \right\Vert_{H^1}^2 \le \norm{\psi}_{L^2} \norm{q}_{H^1},$
and thus that
$$ \norm{q}_{H^1} \le \norm{\psi}_{L^2}.$$
Since in addition we have
$$\norm{\psi}_{L^2}^2=\int_{\RR}\abs{\phi(\xi(z))}^2\;dz=\int_{\RR}\abs{\phi(y)}^2\eta'(y)\;dy \le b\norm{\phi}_{L^2}^2<\infty$$
we obtain $\norm{q}_{H^1} \le \sqrt{b} \norm{\phi}_{L^2}$. Note that
$$\norm{q \circ \eta}_{L^2}^2
=\int_{\RR}\abs{q(\eta(y))}^2\;dy=\int_{\RR}\abs{q(z)}^2\xi'(z)\;dz\le\frac{1}{a}\int_{\RR}\abs{q(z)}^2\;dz=\frac{1}{a}\norm{q}_{L^2}^2.$$
Similarly, $\norm{(q\circ \eta)'}_{L^2}^2 \le b\norm{q'}_{L^2}^2$. Hence,
$$\norm{f}_{H^1}=\norm{q \circ \eta}_{H^1} \le \left(\frac{1}{\sqrt{a}}+\sqrt{b}\right)\norm{q}_{H^1}\le\left(\frac{\sqrt{b}}{\sqrt{a}}+b\right)\norm{\phi}_{L^2}.$$
By combining all inequalities, we get
$$\norm{f}_{1,1}=\norm{f}_{C^1}+\norm{f}_{H^1} \le \left(\frac{b+b^2}{a}+b\right)\norm{\phi}_{L^\infty}+\left(\frac{\sqrt{b}}{\sqrt{a}}+b\right)\norm{\phi}_{L^2},$$
and this proves the continuity.
\end{proof}

\begin{theorem}\label{C1}
The function $F(\phi,\eta)=\mathcal{L}_{\eta}(\phi)$, where the operator $\mathcal{L}_\eta$ is defined by formula \eqref{Lfunction}, is a continuously differentiable function
from $\mathcal{V}_0(\RR) \times 
\mathcal{D}(\RR)$ to $\mathcal{V}_1(\RR)$. 
\end{theorem}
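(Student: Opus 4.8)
The plan is to invoke the standard criterion that a map between open subsets of Banach spaces is $C^1$ provided both of its partial Fréchet derivatives exist in a neighborhood and depend continuously on the base point (see, e.g., Lang \cite{L}). Thus it suffices to produce the two partials $D_\phi F$ and $D_\eta F$, to show each is a bounded linear operator into $\mathcal{V}_1(\RR)$, and to verify their joint continuity in $(\phi,\eta)$. The first partial is immediate: since $\phi\mapsto\mathcal{L}_\eta(\phi)$ is linear, we have $D_\phi F(\phi,\eta)=\mathcal{L}_\eta$, which is independent of $\phi$ and bounded by Lemma \ref{boundedlinear}. Its continuity reduces to showing that $\eta\mapsto\mathcal{L}_\eta$ is continuous into the space of bounded operators $\mathcal{V}_0(\RR)\to\mathcal{V}_1(\RR)$, that is, that $\norm{\mathcal{L}_{\eta_1}(\phi)-\mathcal{L}_{\eta_2}(\phi)}_{1,1}$ is small, uniformly over $\norm{\phi}\le 1$, when $\eta_2$ is close to $\eta_1$.

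For this it is convenient to rewrite \eqref{L} in the kernel form
\begin{equation*}
\mathcal{L}_\eta(\phi)(x)=\frac{1}{2}\int_\RR \sgn(y-x)\,e^{-\abs{\eta(x)-\eta(y)}}\,\phi(y)\,\eta'(y)\;dy,
\end{equation*}
which follows from \eqref{L} by the change of variables $z=\eta(y)$ together with the monotonicity of $\eta$. The difference $\mathcal{L}_{\eta_1}(\phi)-\mathcal{L}_{\eta_2}(\phi)$ is then controlled by the difference of the kernels $e^{-\abs{\eta_i(x)-\eta_i(y)}}\eta_i'(y)$; the bounds $\eta_i'\ge a>0$ furnish exponential decay in $\abs{x-y}$ that makes the $L^\infty$ and $L^2$ estimates go through exactly as in Lemma \ref{boundedlinear}, while the $L^2$ estimate of the \emph{derivative} of the output requires the tail-splitting combined with the uniform continuity of $\eta_1'$, precisely as in the proofs of Lemma \ref{topgroup} and Lemma \ref{composition1}.

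To compute $D_\eta F$, I would differentiate the kernel form under the integral sign in a direction $w\in\mathcal{V}_1(\RR)$. For $\norm{w}_{1,1}$ small, $\eta+\epsilon w$ is still strictly increasing, so the sign of $(\eta+\epsilon w)(x)-(\eta+\epsilon w)(y)$ agrees with $\sgn(x-y)$ and the exponent is smooth in $\epsilon$ off the diagonal. Differentiating yields
\begin{equation*}
D_\eta F(\phi,\eta)w(x)=\frac{1}{2}\int_\RR \phi(y)\Big[(w(x)-w(y))\eta'(y)+\sgn(y-x)\,w'(y)\Big]e^{-\abs{\eta(x)-\eta(y)}}\;dy,
\end{equation*}
which I would reorganize as $w\cdot\mathcal{A}_\eta(\phi)-\mathcal{A}_\eta(w\phi)+\mathcal{L}_\eta(\phi w'/\eta')$, where $\mathcal{A}_\eta$ denotes the companion operator with the even kernel $\tfrac12 e^{-\abs{\eta(x)-\eta(y)}}\eta'(y)$. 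Since $w\phi$ and $\phi w'/\eta'$ lie in $\mathcal{V}_0(\RR)$, and $\mathcal{A}_\eta$ obeys the same bounds as $\mathcal{L}_\eta$ (its defining equation being $q-q''=\psi$ rather than $q-q''=\psi'$), each summand lands in $\mathcal{V}_1(\RR)$; using that pointwise multiplication is a bounded bilinear map on $\mathcal{V}_1(\RR)$, it follows that $D_\eta F(\phi,\eta)$ is a bounded linear map $\mathcal{V}_1(\RR)\to\mathcal{V}_1(\RR)$. Its continuity in $(\phi,\eta)$ then follows from the continuity of $\eta\mapsto\mathcal{A}_\eta,\mathcal{L}_\eta$ established above, the continuity of multiplication, and Lemma \ref{composition1}.

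The main obstacle is the remaining verification that this $D_\eta F$ is genuinely the Fréchet derivative, i.e. that
\begin{equation*}
\norm{\mathcal{L}_{\eta+w}(\phi)-\mathcal{L}_\eta(\phi)-D_\eta F(\phi,\eta)w}_{1,1}=o(\norm{w}_{1,1}).
\end{equation*}
The subtlety is that $t\mapsto e^{-\abs{t}}$ is only Lipschitz, not differentiable at the origin; however, because $\eta'\ge a>0$ and $w$ is small, the argument $\eta(x)-\eta(y)$ stays strictly away from $0$ off the diagonal and never changes sign along the segment from $\eta$ to $\eta+w$, so a second-order Taylor expansion of the kernel is valid and the quadratic remainder is dominated by the integrable kernel $e^{-a\abs{x-y}}$ times $\norm{w}_{1,1}^2$. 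The $L^\infty$ bounds on the remainder and its derivative are then routine, and the $L^2$ bound on the derivative once more relies on the tail-splitting and uniform-continuity argument. Once both partials are shown to exist, be bounded, and be continuous, the standard criterion yields that $F$ is $C^1$.
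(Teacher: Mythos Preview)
Your proposal is correct, and the overall architecture---linearity in $\phi$ handled by Lemma~\ref{boundedlinear}, explicit computation of the $\eta$-derivative, and a three-term splitting of that derivative---matches the paper's proof exactly: your terms $w\cdot\mathcal{A}_\eta(\phi)$, $-\mathcal{A}_\eta(w\phi)$, and $\mathcal{L}_\eta(\phi w'/\eta')$ are precisely the paper's $h_1$, $h_2$, $h_3$ (with $h_i=q_i\circ\eta$, where $q_1-q_1''=\psi$, $q_2-q_2''=-\psi\,(\rho\circ\xi)$, and $q_3-q_3''=[\psi\,(\rho'\circ\xi)\,\xi']'$).

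The genuine methodological difference is in how continuity in $\eta$ is obtained. You propose direct kernel estimates on $e^{-\abs{\eta_i(x)-\eta_i(y)}}\eta_i'(y)$, combined with the tail-splitting and uniform-continuity device, aiming at operator-norm continuity of $\eta\mapsto\mathcal{L}_\eta,\mathcal{A}_\eta$. The paper instead writes each $q_i$ as a fixed bounded operator $(1-\partial_x^2)^{-1}$ or $\partial_x(1-\partial_x^2)^{-1}$ applied to a function assembled from $\phi\circ\eta^{-1}$, $\rho\circ\eta^{-1}$, $(\partial_x\rho)\circ\eta^{-1}$, and $\partial_x\eta^{-1}$, and then reduces continuity in $\eta$ to the already-established continuity of inversion (Lemma~\ref{topgroup}) together with a $\mathcal{V}_0$-valued composition lemma (Lemma~\ref{comp3}, the analogue of Lemma~\ref{composition1}). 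Your route is more self-contained and yields operator-norm statements directly; the paper's route is more modular, recycling the topological-group lemmas and avoiding a second pass of kernel estimates. You are also more explicit than the paper about checking that the Gateaux derivative is in fact the Fr\'echet derivative; the paper computes the directional derivative and verifies its continuity, implicitly invoking the same standard criterion you cite.
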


\begin{proof}
Note that $F$ is smooth as a function of $\phi$ since it is linear with respect to $\phi$ and bounded in $\phi$ from Lemma \ref{boundedlinear}. So we only have to show the continuous differentiability of $F$ with respect to $\eta$. We can compute
\begin{align}
\partial_\eta F(\phi,\eta)(\rho)&=\frac{d}{d\epsilon}\Big|_{\epsilon=0}F(\phi,\eta+\epsilon \rho)\\
&=\frac{1}{2}\int_{-\infty}^x
e^{-\eta(x)+\eta(y)}\phi(y)\left(\rho(x)\eta'(y)-\rho(y)\eta'(y)-\rho'(y)\right)\;dy\\
&\qquad\qquad +\frac{1}{2}\int_{x}^{\infty}e^{\eta(x)-\eta(y)}\phi(y)\left(\rho(x)\eta'(y)-\rho(y)\eta'(y)+\rho'(y)\right)\;dy.\label{Fderivative}
\end{align}

We first want to show that $G(x):=\partial_\eta F(\phi,\eta)(\rho)(x)$ is a function in the correct target
space $\mathcal{V}_1(\RR)$. So we must check the following:

\begin{equation*}
\begin{aligned}
&\max_{x \in \RR} |G(x)|<\infty,\;\;\\
&\max_{x \in \RR} |G'(x)|<\infty,\;\;
\end{aligned}
\begin{aligned}
&\lim_{|x| \to \infty}|G(x)|=0,\;\;\\
&\lim_{|x| \to \infty}|G'(x)|=0.\;\;
\end{aligned}
\begin{aligned}
&\int_{\RR} |G(x)|^2\;dx <\infty,\\
&\int_{\RR} |G'(x)|^2\;dx <\infty,
\end{aligned}
\end{equation*}

Observe that the two conditions in the first column follow from the two conditions in the second column. Note that $\rho \in T_\eta \mathcal{D}(\RR)=\mathcal{V}_1(\RR)$, so we have $-c \le \rho \le c$ and $-N \le \rho' \le N$ for some constants $c,N>0$.

\begin{enumerate}
\item ${\displaystyle \lim_{x \to \infty}G(x)=0}$ and ${\displaystyle \lim_{x \to \infty}G'(x)=0}$

\vspace{0.2cm}
Denote $G(x) = (I)+(II)$ in terms of the two integrals in \eqref{Fderivative}.
By using l'Hopital's Rule, we have
\begin{align*}
\lim_{x \to \infty}(I)&=\lim_{x \to \infty}\frac{\rho(x)\int_{-\infty}^x
e^{\eta(y)}\phi(y)\eta'(y)\;dy-\int_{-\infty}^x e^{\eta(y)}\phi(y)\rho(y)\eta'(y)\;dy-\int_{-\infty}^x
e^{\eta(y)}\phi(y)\rho'(y)\;dy}{2e^{\eta(x)}}\\
&=\lim_{x \to \infty}\frac{\rho'(x)}{\eta'(x)}\frac{\int_{-\infty}^x
e^{\eta(y)}\phi(y)\eta'(y)\;dy}{2e^{\eta(x)}}-\frac{\phi(x)\rho'(x)}{2\eta'(x)}\\
&=\lim_{x \to \infty}\frac{\rho'(x)}{\eta'(x)}\frac{\int_{-\infty}^x
e^{\eta(y)}\phi(y)\eta'(y)\;dy}{2e^{\eta(x)}},\;\left(\because \lim_{x \to \infty}\rho'(x)=0\right)
\end{align*}
Note that
\begin{align*}
\lim_{x \to \infty}\frac{\int_{-\infty}^x e^{\eta(y)}\phi(y)\eta'(y)\;dy}{2e^{\eta(x)}}&=\lim_{x \to
\infty}\frac{e^{\eta(x)}\phi(x)\eta'(x)}{2e^{\eta(x)}\eta'(x)}=\lim_{x \to \infty}\frac{\phi(x)}{2}=0.
\end{align*}
Hence, $\lim_{x \to \infty}(I)=0$. For $(II)$, we get another limit of indeterminate form $\frac{0}{0}$ and the computation is essentially the same. So $\lim_{x \to \infty}(II)=0$ and we get $\lim_{x \to \infty}G(x)=0$. The proofs for $\lim_{x \to -\infty} G(x)=0$ and $\lim_{|x| \to \infty}G'(x)=0$ are similar.

\item ${\displaystyle \int_{\RR} |G(x)|^2+|G'(x)|^2\;dx <\infty}$

\vspace{0.2cm}
Note that
\begin{align*}
G(x)&:=\frac{1}{2}\int_{-\infty}^x
e^{-\eta(x)+\eta(y)}\phi(y)\left(\rho(x)\eta'(y)-\rho(y)\eta'(y)-\rho'(y)\right)\;dy\\
&\;\;\;\;\;+\frac{1}{2}\int_{x}^{\infty}e^{\eta(x)-\eta(y)}\phi(y)\left(\rho(x)\eta'(y)-\rho(y)\eta'(y)+\rho'(y)\right)\;dy\\
&=\left(\frac{1}{2}\rho(x)\int_{-\infty}^x
e^{-\eta(x)+\eta(y)}\phi(y)\eta'(y)\;dy+\frac{1}{2}\rho(x)\int_x^\infty
e^{\eta(x)-\eta(y)}\phi(y)\eta'(y)\;dy\right)\\
&\;\;\;\;\;+\left(-\frac{1}{2}\int_{-\infty}^x
e^{-\eta(x)+\eta(y)}\phi(y)\rho(y)\eta'(y)\;dy-\frac{1}{2}\int_x^\infty
e^{\eta(x)-\eta(y)}\phi(y)\rho(y)\eta'(y)\;dy\right)\\
&\;\;\;\;\;+\left(-\frac{1}{2}\int_{-\infty}^x e^{-\eta(x)+\eta(y)}\phi(y)\rho'(y)\;dy+\frac{1}{2}\int_x^\infty
e^{\eta(x)-\eta(y)}\phi(y)\rho'(y)\;dy\right)\\
&=:h_1(x)+h_2(x)+h_3(x).
\end{align*}

Then we use the same method as in Lemma \ref{boundedlinear}. We can identify
$$\left\{
  \begin{array}{l l}
h_1(x)=\rho(x)q_1(\eta(x)),&\quad q_1(x)-q_1''(x)=\psi(x),\\
h_2(x)=q_2(\eta(x)),&\quad q_2(x)-q_2''(x)=-\psi(x)\rho(\xi(x)),\\
h_3(x)=q_3(\eta(x)),&\quad q_3(x)-q_3''(x)=[\psi(x)\rho'(\xi(x))\xi'(x)]',
\end{array} \right.$$

where $\psi(x)=\phi(\xi(x))$ and $\xi=\eta^{-1}$ as before. Then we have

\begin{align*}
\norm{q_1}_{H^1}&\le \norm{\psi}_{L^2},\\
\norm{q_2}_{H^1}&\le \norm{\psi (\rho \circ \xi)}_{L^2},\\
\norm{q_3}_{H^1}&\le \norm{\psi (\rho' \circ \xi) \xi'}_{L^2},
\end{align*}
where we can check that each right side is finite.

Note that since $1=s>\frac{n}{2}=\frac{1}{2}$, the Sobolev space $H^1$ is an algebra under pointwise multiplication (see Lemma 2.7 in \cite{IKT}.) Then,
$$\norm{h_1}_{H^1} \le K\norm{\rho}_{H^1}\norm{q \circ \eta}_{H^1},$$
for some constant $K$. We have
$$\norm{q \circ \eta}_{L^2}^2
=\int_{\RR}q(\eta(y))^2\;dy=\int_{\RR}q(z)^2\xi'(z)\;dz\le\frac{1}{a}\int_{\RR}q(z)^2\;dz<\infty.$$
Similarly, $\norm{(q \circ \eta)'}_{L^2}<\infty$ and we conclude that $\norm{h_1}_{H^1}<\infty$. We can show $\norm{h_2}_{H^1}<\infty$ by using the same method. The proof for $\norm{h_3}_{H^1}<\infty$ is similar to that of Lemma \ref{boundedlinear}. This completes the proof for $G \in \mathcal{V}_1(\RR)$.

It remains to show that $G$ is continuous with respect to $\eta$ to complete the proof of continuous differentiability of $F$ with respect to $\eta$. We will show that the maps $q_i$ in the proof of this Theorem, are continuous in $\eta$. The idea is to identify $q_i$ as the composition of continuous operations. We prove this in the following Lemmas. We first check that the composition map that appears in $q_i$ is continuous.
\end{enumerate}
\end{proof}

\begin{lemma}\label{comp3}
The composition mapping $\mathrm{Comp}^3:\mathcal{V}_0(\RR) \times \mathcal{D}(\RR) \to \mathcal{V}_0(\RR)$ is continuous.
\end{lemma}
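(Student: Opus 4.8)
The plan is to follow the template of Lemma~\ref{composition1}, adapting it to the weaker regularity available in $\mathcal{V}_0(\RR)$: here $\phi$ is merely continuous and decaying, so in place of the Mean Value Theorem and a derivative bound we must rely on the uniform continuity of $\phi_1$. First I fix $(\phi_1,\eta_1)$ and let $(\phi_2,\eta_2)$ vary, writing $\sigma:=\norm{\phi_1-\phi_2}_{\mathcal{V}_0}$ and $\rho:=\norm{\eta_1-\eta_2}_{1,1}$, with bounds $a_i\le\eta_i'\le b_i$, inverses $\xi_i$, and $a_2\ge a_1-\rho$. Since $\phi_1\in C^0_0(\RR)$ it is uniformly continuous, so there is an increasing modulus of continuity $\omega_1$ with $\omega_1(0^+)=0$ and $\abs{\phi_1(s)-\phi_1(t)}\le\omega_1(\abs{s-t})$. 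That $\phi\circ\eta$ again lies in $\mathcal{V}_0(\RR)$ is immediate: it is continuous, it tends to $0$ at infinity because $\phi$ does and $\eta(x)\to\pm\infty$, and the change of variables $y=\eta(x)$ gives $\norm{\phi\circ\eta}_{L^2}^2\le a^{-1}\norm{\phi}_{L^2}^2<\infty$.

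For the $L^\infty$ estimate I split
$$\phi_1\circ\eta_1-\phi_2\circ\eta_2=(\phi_1\circ\eta_1-\phi_1\circ\eta_2)+(\phi_1\circ\eta_2-\phi_2\circ\eta_2).$$
The second term has sup norm at most $\norm{\phi_1-\phi_2}_{L^\infty}\le\sigma$ since $\eta_2$ is onto, while for the first I use uniform continuity: $\abs{\phi_1(\eta_1(x))-\phi_1(\eta_2(x))}\le\omega_1(\abs{\eta_1(x)-\eta_2(x)})\le\omega_1(\rho)$. Hence $\norm{\phi_1\circ\eta_1-\phi_2\circ\eta_2}_{L^\infty}\le\sigma+\omega_1(\rho)$, which tends to $0$ as $\sigma,\rho\to0$.

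The $L^2$ estimate uses the same splitting. The term $\norm{\phi_1\circ\eta_2-\phi_2\circ\eta_2}_{L^2}$ is handled by the change of variables $y=\eta_2(x)$, yielding the bound $(a_1-\rho)^{-1/2}\sigma$. The remaining term $\norm{\phi_1\circ\eta_1-\phi_1\circ\eta_2}_{L^2}$ is the crux of the proof and cannot be controlled by a single change of variables, because $\phi_1$ is not Lipschitz. Instead I split the integral over $\RR$ into $\abs{x}\le M'$ and $\abs{x}>M'$, exactly as in Lemma~\ref{topgroup}. Given $\epsilon>0$, I first fix $M$ so that $\int_{\abs{z}>M}\phi_1(z)^2\,dz$ is small, using only $\phi_1\in L^2$, and set $M'=M+2\norm{\eta_1-\id}_{L^\infty}$; for $\rho$ sufficiently small this forces $\abs{\eta_i(x)}>M$ for both $i=1,2$ whenever $\abs{x}>M'$. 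On the tail I bound the integrand by $2\phi_1(\eta_1(x))^2+2\phi_1(\eta_2(x))^2$ and change variables in each piece, dominating it by $2(a_1-\rho)^{-1}\int_{\abs{z}>M}\phi_1(z)^2\,dz<\epsilon^2/2$. On the compact part uniform continuity gives an integrand at most $\omega_1(\rho)^2$, so that piece is at most $2M'\omega_1(\rho)^2$, which is $<\epsilon^2/2$ once $\rho$ is small.

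The main obstacle is precisely this last $L^2$ estimate: the loss of the Lipschitz bound on $\phi_1$ makes the compact/tail decomposition unavoidable, and one must take care that the cutoff $M'$ depends only on $\phi_1$, $\eta_1$, and $\epsilon$ (not on $\rho$), so that the compact contribution genuinely vanishes as $\rho\to0$. Collecting the four bounds shows $\norm{\phi_1\circ\eta_1-\phi_2\circ\eta_2}_{\mathcal{V}_0}$ can be made arbitrarily small by taking $\sigma$ and $\rho$ small, which is the asserted continuity. As in the previous lemmas, the dependence on the fixed modulus $\omega_1$ prevents this from being \emph{uniform} continuity.
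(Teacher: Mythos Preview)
Your proof is correct and follows precisely the route the paper intends: the paper's own proof is the single sentence ``The proof is analogous to that of Lemma~\ref{composition1},'' and you have supplied exactly that analogy, correctly replacing the Mean Value Theorem by the uniform continuity of $\phi_1\in C^0_0$ and handling the $L^2$ estimate of $\phi_1\circ\eta_1-\phi_1\circ\eta_2$ via the compact/tail decomposition from Lemma~\ref{topgroup}. Your choice of splitting so that the ``hard'' term involves only the fixed function $\phi_1$ (rather than the varying $\phi_2$, as in the literal splitting of Lemma~\ref{composition1}) is the right call, since the modulus of continuity and the $L^2$ tail must depend only on data at the basepoint.
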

\begin{proof}
The proof is analogous to that of Lemma \ref{composition1}.

\bigskip

\end{proof}

\begin{lemma}
The three maps $q_i:\mathcal{D}(\RR) \to \mathcal{V}_1(\RR)$, which are defined as above, are continuous with respect to $\eta$.
\end{lemma}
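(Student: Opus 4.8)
The plan is to realize each $q_i$ as a finite composition of maps that have already been (or are easily) shown to be continuous, and then invoke the fact that compositions, sums, and finite products of continuous maps between these Banach spaces are continuous. Recall that $q_i$ depends on $\eta$ only through $\xi=\eta^{-1}$, via the quantities $\psi=\phi\circ\xi$, $\rho\circ\xi$, $\rho'\circ\xi$, $\xi'$, and via the fixed resolvent operators. Concretely,
\[
q_1=(1-\partial_x^2)^{-1}(\phi\circ\xi),\qquad
q_2=-(1-\partial_x^2)^{-1}\big((\phi\circ\xi)(\rho\circ\xi)\big),\qquad
q_3=\mathcal{L}\big((\phi\circ\xi)(\rho'\circ\xi)\,\xi'\big),
\]
where $\mathcal{L}=\partial_x(1-\partial_x^2)^{-1}$ as in \eqref{Lfunction}, using that $(1-\partial_x^2)^{-1}\partial_x=\mathcal{L}$ on the relevant spaces.

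The building blocks are: (i) the inversion $\eta\mapsto\xi$, continuous $\mathcal{D}\to\mathcal{D}$ by Lemma \ref{topgroup}; (ii) the composition maps $\xi\mapsto\phi\circ\xi$ and $\xi\mapsto\rho'\circ\xi$, continuous $\mathcal{D}\to\mathcal{V}_0$ by Lemma \ref{comp3} (note $\rho'\in\mathcal{V}_0$ because $\rho\in\mathcal{V}_1$), and $\xi\mapsto\rho\circ\xi$, continuous $\mathcal{D}\to\mathcal{V}_1$ by Lemma \ref{composition1}; (iii) the derivative map $\xi\mapsto\xi'$, continuous from $\mathcal{D}$ into $L^\infty$, which is immediate since $\norm{\xi_1'-\xi_2'}_{L^\infty}\le\norm{\xi_1-\xi_2}_{1,1}$; and (iv) the resolvents $(1-\partial_x^2)^{-1}$ and $\mathcal{L}$, which are bounded \emph{linear} operators $\mathcal{V}_0\to\mathcal{V}_1$ and hence continuous. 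For $\mathcal{L}$ this is exactly Lemma \ref{boundedlinear} with $\eta=\id$, and for $(1-\partial_x^2)^{-1}$ one combines the pointwise bound $\norm{q}_{L^\infty}\le\norm{\psi}_{L^\infty}$ and the energy bound $\norm{q}_{H^1}\le\norm{\psi}_{L^2}$ from the proof of Lemma \ref{boundedlinear} with the identity $q'=\mathcal{L}(\psi)$, which places $q'\in\mathcal{V}_1\subset\mathcal{V}_0$ and so yields the decay $q'\to0$ at infinity.

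It remains to handle the pointwise products. I would first record that $\mathcal{V}_1\hookrightarrow\mathcal{V}_0$ continuously (by Lemma \ref{limitatinfty}, elements of $\mathcal{V}_1$ decay to zero and lie in $L^2$), and that multiplication is a bounded bilinear, hence continuous, map: on $\mathcal{V}_0\times\mathcal{V}_0\to\mathcal{V}_0$ one has $\norm{fg}_{L^\infty}\le\norm{f}_{L^\infty}\norm{g}_{L^\infty}$ and $\norm{fg}_{L^2}\le\norm{f}_{L^\infty}\norm{g}_{L^2}$, and likewise multiplication by a bounded multiplier is continuous $\mathcal{V}_0\times L^\infty\to\mathcal{V}_0$. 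With these facts in hand, $q_1$, $q_2$, and $q_3$ are each manifestly a composition of continuous maps ($\eta\mapsto\xi$, then the composition maps and products, then a bounded resolvent), which proves the lemma.

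The main obstacle is not any single estimate but the bookkeeping needed to see that every intermediate quantity lands in the correct space with continuous dependence on $\eta$. The one genuinely delicate point is the product $(\phi\circ\xi)(\rho'\circ\xi)\,\xi'$ appearing in $q_3$: the factor $\xi'$ does \emph{not} decay at infinity (it tends to $1$), so it cannot be treated as an element of $\mathcal{V}_0$. This is why I isolate it as an $L^\infty$-multiplier and use that the complementary factor $(\phi\circ\xi)(\rho'\circ\xi)$ already lies in $C^0_0\cap L^2=\mathcal{V}_0$; the product then still decays and stays square-integrable, and multiplication by $\xi'$ depends continuously on $\eta$ precisely because $\eta\mapsto\xi'$ is continuous into $L^\infty$. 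Once this product is controlled, applying the bounded operator $\mathcal{L}$ finishes the argument exactly as for $q_1$ and $q_2$.
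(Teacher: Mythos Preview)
Your proof is correct and follows essentially the same approach as the paper: you identify each $q_i$ as a composition of the continuous building blocks $\mathrm{Inv}$, $\mathrm{Comp}^3$ (and $\mathrm{Comp}^1$), differentiation, pointwise multiplication, and the bounded resolvent operators, then focus on $q_3$ as the hardest case. Your treatment is in fact slightly more careful than the paper's in explicitly isolating $\xi'$ as an $L^\infty$ multiplier (since it does not decay) and recording the continuity of the bilinear product maps, but the overall strategy is identical.
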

\begin{proof}
First, we can identify $q_i$ as following:
\begin{align*}
q_1&:= (1-\partial_x^2)^{-1}(\phi \circ \eta^{-1}),\\
q_2&:= -(1-\partial_x^2)^{-1}[(\phi \circ \eta^{-1})(\rho \circ \eta^{-1})],\\
q_3&:= -\partial_x(1-\partial_x^2)^{-1}[(\phi\circ\eta^{-1})((\partial_x\rho) \circ \eta^{-1})\partial_x \eta^{-1}],
\end{align*}
We will prove the continuity of $q_3$, since the proofs for $q_1$ and $q_2$ are much easier.

In the definition of $q_3$, let $r(\eta):=(\phi\circ\eta^{-1})((\partial_x\rho) \circ \eta^{-1})\partial_x \eta^{-1}$, the function inside the square bracket. Then the map $r:\mathcal{D} \to \mathcal{V}_0$ is continuous with respect to $\eta$. This is because the following maps are continuous:
\begin{itemize}
\item $\mathrm{Inv}:\eta \mapsto \eta^{-1}$ ($\because$ Lemma \ref{topgroup}),
\item $\mathrm{Comp}^3:\eta \mapsto \phi \circ \eta^{-1}$ ($\because$ Lemma \ref{comp3}),
\item $\partial_x:\rho \mapsto \partial_x\rho$, differentiation with respect to $x$,
\item multiplication of all three continuous maps.
\end{itemize}
Note that $r(\eta) \in \mathcal{V}_0$ since each term in the product are functions in $C^0_0$ and
$$\norm{r(\eta)}_{L^2}^2 \le \int_{\RR}\abs{(\phi\circ\eta^{-1})((\partial_x\rho) \circ \eta^{-1})\partial_x \eta^{-1}}^2\;dx\le \norm{\phi \circ \eta}_{L^\infty}^2 \norm{\partial_x \eta^{-1}}_{L^\infty}^2 \norm{(\partial_x \rho) \circ \eta^{-1}}_{L^2}^2 <\infty.$$
Lastly, from the explicit formula \eqref{L} we have
$$q_3=-\partial_x(1-\partial_x^2)^{-1}(r)=\frac{1}{2}e^{-x}\int_{-\infty}^x e^z r(z)dz-\frac{1}{2}e^x \int_{x}^\infty
e^{-z}r(z)\;dz.$$
Then we can check that
$$\norm{q_3}_{1,1}=\norm{q_3}_{C^1}+\norm{q_3}_{H^1}\le 2\norm{r}_{L^\infty}+\norm{r}_{L^2},$$
by doing essentially the same estimates in Lemma \ref{boundedlinear}. Hence, $q_3$ is continuous.
\end{proof}

\begin{corollary}
The map $G$ is continuous with respect to $\eta$.
\end{corollary}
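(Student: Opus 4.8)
The plan is to return to the decomposition $G = h_1 + h_2 + h_3$ established in the proof of Theorem \ref{C1}, where $h_1(x) = \rho(x)\,q_1(\eta(x))$, $h_2(x) = q_2(\eta(x))$ and $h_3(x) = q_3(\eta(x))$, and to exhibit each summand as a composite of maps already shown to be continuous. Since $\mathcal{V}_1(\RR)$ is a Banach space, $G = h_1 + h_2 + h_3$ will be continuous in $\eta$ as soon as each of the three pieces is, so the whole problem reduces to the continuity of the maps $\eta \mapsto q_i(\eta) \circ \eta$ (together with, for $h_1$, the subsequent multiplication by the fixed $\rho$).

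First I would handle $h_2$ and $h_3$, which have the cleaner form $q_i \circ \eta$. The preceding lemma already gives that $\eta \mapsto q_i(\eta)$ is continuous from $\mathcal{D}(\RR)$ to $\mathcal{V}_1(\RR)$; pairing this with the identity in the second slot, the map $\eta \mapsto (q_i(\eta),\eta)$ is continuous from $\mathcal{D}(\RR)$ into $\mathcal{V}_1(\RR) \times \mathcal{D}(\RR)$. Feeding this into the jointly continuous composition map $\mathrm{Comp}^1$ of Lemma \ref{composition1}, we conclude that $\eta \mapsto \mathrm{Comp}^1(q_i(\eta),\eta) = q_i(\eta)\circ\eta$ is continuous, so that $h_2$ and $h_3$ are continuous in $\eta$.

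For $h_1 = \rho\cdot(q_1\circ\eta)$, the same argument shows $\eta \mapsto q_1(\eta)\circ\eta$ is continuous into $\mathcal{V}_1(\RR)$, and it then remains to note that multiplication by the fixed element $\rho \in \mathcal{V}_1(\RR)$ is a continuous operation on $\mathcal{V}_1(\RR)$. Here I would invoke that $\mathcal{V}_1(\RR)$ is a topological algebra: the $H^1$ part of the norm is submultiplicative by the algebra property already used in Theorem \ref{C1} (Lemma 2.7 in \cite{IKT}), and the $C^1$ part is controlled by the product rule, so that $\norm{\rho\,g}_{1,1} \le K\norm{\rho}_{1,1}\norm{g}_{1,1}$ for some constant $K$. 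Hence $g \mapsto \rho\,g$ is a bounded linear, and therefore continuous, map, and $h_1$ is continuous in $\eta$ as a composite.

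The argument is essentially bookkeeping once the earlier lemmas are in place; the only point that requires care is the double occurrence of $\eta$ in each $q_i\circ\eta$, appearing both inside the operator defining $q_i$ and as the map we postcompose with. This is precisely why the \emph{joint} continuity of $\mathrm{Comp}^1$ in Lemma \ref{composition1}, rather than mere separate continuity in each variable, is the essential ingredient, and I expect no genuine obstacle beyond assembling these pieces correctly.
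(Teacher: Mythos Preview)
Your proposal is correct and follows essentially the same route as the paper: the decomposition $G=h_1+h_2+h_3$, continuity of $\eta\mapsto q_i$ from the preceding lemma combined with the joint continuity of $\mathrm{Comp}^1$ from Lemma~\ref{composition1}, and the multiplication by $\rho$ for $h_1$. If anything, you spell out more carefully than the paper does the role of \emph{joint} continuity in handling the double occurrence of $\eta$ in $q_i(\eta)\circ\eta$.
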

\begin{proof}
Recall that $G=h_1+h_2+h_3$. The composition $q_i \circ \eta$ is continuous by Theorem \ref{composition1}, so $h_1$, $h_2$, and $h_3$ are continuous. Since the multiplication map is continuous, $h_1$ is continuous, and so the sum of three maps is continuous.
\end{proof}

Then by the following existence and uniqueness theorem for ODEs in Banach space, we get the local existence and uniqueness of the solution of $\eta$ of the Lagrangian equation \eqref{lagrangian}.

\begin{theorem}\label{Picard}\cite{L}
Let $f:J \times U \to \mathbf{E}$ be continuous, and satisfy a Lipschitz condition on $U$ uniformly with respect to $J$. Let $x_0$ be a point of $U$. Then there exists an open subinterval $J_0$ of $J$ containing $0$, and an open subset of $U$ containing $x_0$ such that $f$ has a unique flow
$$\alpha:J_0 \times U_0 \to U$$
satisfying
$$\frac{d\alpha}{dt}(t,x)=f(t,\alpha(t,x)),\;\;\;\alpha(0,x)=x.$$
We can select $J_0$ and $U_0$ such that $\alpha$ is continuous and satisfies a Lipschitz condition on $J_0 \times U_0$.
\end{theorem}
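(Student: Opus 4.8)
The plan is to treat this as the classical Picard existence and uniqueness theorem for ordinary differential equations, transported to the Banach space $\mathbf{E}$: one rewrites the Cauchy problem as an integral equation, solves it by the contraction mapping principle, and then controls the dependence on the initial datum by a Gronwall estimate. The guiding observation is that almost nothing in this argument uses finite dimensionality. The contraction principle needs only completeness of $\mathbf{E}$ (hence of the space of continuous $\mathbf{E}$-valued curves with the sup norm), and Gronwall's inequality concerns only the scalar function $t \mapsto \norm{\alpha(t,x)-\alpha(t,y)}$, so the textbook proof carries over essentially verbatim.

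First I would construct the integral curve through $x_0$. Fix $r>0$ with $\overline{B}(x_0,r)\subset U$ and $[-b,b]\subset J$, and let $L$ be the Lipschitz constant furnished by the hypothesis. The required a priori bound on $f$ is obtained from the Lipschitz condition rather than from compactness: since $t\mapsto f(t,x_0)$ is continuous on the compact interval $[-b,b]$ it is bounded there, so $\norm{f(t,x)} \le \norm{f(t,x_0)} + Lr \le M$ on $[-b,b]\times\overline{B}(x_0,r)$. (This is the one place to be careful, since a continuous map need not be bounded on a closed ball of an infinite-dimensional space.) Choosing $c\in(0,b]$ so small that $Mc\le r$ and $Lc<1$, and setting $J_0=(-c,c)$, I would apply the Banach fixed point theorem to
$$(T\gamma)(t) = x_0 + \int_0^t f(s,\gamma(s))\,ds$$
on the complete metric space of continuous curves $\gamma:\overline{J_0}\to\overline{B}(x_0,r)$ with the sup metric: the bound $Mc\le r$ makes $T$ a self-map, and $\norm{T\gamma_1-T\gamma_2}_\infty \le Lc\,\norm{\gamma_1-\gamma_2}_\infty$ with $Lc<1$ makes it a contraction. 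Its unique fixed point is, by the Banach-space fundamental theorem of calculus, the unique solution of $\dot x=f(t,x)$, $x(0)=x_0$.

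Next I would let the base point vary to build the flow. For $x$ in the smaller ball $U_0=B(x_0,r/2)$, the identical argument, with $c$ shrunk so that $Mc\le r/2$, produces a unique curve $t\mapsto\alpha(t,x)$ with $\alpha(0,x)=x$ and defines $\alpha:J_0\times U_0\to U$. It is precisely the uniformity of $M$ and $L$ over the ball that lets a single interval $J_0$ and a single contraction constant serve every base point at once. For the continuity and Lipschitz assertion I would subtract the integral equations for two data $x,y$ to get
$$\norm{\alpha(t,x)-\alpha(t,y)} \le \norm{x-y} + L\int_0^t\norm{\alpha(s,x)-\alpha(s,y)}\,ds,$$
and Gronwall's inequality then yields $\norm{\alpha(t,x)-\alpha(t,y)}\le e^{L\abs{t}}\norm{x-y}$, i.e.\ Lipschitz dependence on the datum uniformly in $t\in J_0$. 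Combined with $\norm{\alpha(t,x)-\alpha(s,x)}\le M\abs{t-s}$ (from $\norm{\partial_t\alpha}=\norm{f}\le M$), the triangle inequality gives joint Lipschitz continuity on $J_0\times U_0$.

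The main obstacle is essentially bookkeeping together with the single infinite-dimensional subtlety already noted: the radius $r$, the bound $M$, the Lipschitz constant $L$, and the length $c$ must be fixed simultaneously so that the self-map property, the contraction estimate, and the uniform Lipschitz bound all hold on one $J_0\times U_0$, and the bound $M$ must be produced from the Lipschitz hypothesis rather than from any compactness. Beyond this there is no genuine analytic difficulty, and for the present application it remains only to invoke the earlier lemmas, which show that the right-hand side of \eqref{lagrangian} is continuous and locally Lipschitz, so that the hypotheses of the theorem are met.
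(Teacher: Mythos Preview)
The paper does not give its own proof of this theorem: it is stated with a citation to Lang~\cite{L} and then immediately applied. Your proposal supplies exactly the standard Picard--Lindel\"of argument one finds in that reference, correctly noting the one infinite-dimensional subtlety (obtaining the bound $M$ from the Lipschitz hypothesis rather than compactness), so there is nothing to compare and your argument is fine.
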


In our situation, $f=F(\phi,\eta)$, $U=\mathcal{D}(\RR)$, and $\mathbf{E}=T\mathcal{D}(\RR)$ which is the tangent bundle of $\mathcal{D}(\RR)$. The integral curve $\alpha$ corresponds to $\eta(t)$ which is a curve in $\mathcal{D}(\RR)$. Finally, we have the following theorem, which proves the local well-posedness of the Camassa-Holm equation \eqref{CH1}.

\begin{theorem}
The Cauchy problem for the Camassa-Holm equation is equivalent to the system
\begin{equation}\label{CH3}
\left\{
  \begin{array}{l l}
       \frac{d \eta}{dt}=U\\
       \frac{dU}{dt}=\mathcal{L}_\eta\left(U^2+\frac{U_x^2}{2\eta_x^2}\right)
   \end{array} \right.
\end{equation}
with initial conditions $\eta(0,\cdot)=\mathrm{Id}$, $U(0,\cdot)=u_0$. This system describes the flow of a $C^1$ vector field on $T\mathcal{D}(\RR)$ and the solution curve $(\eta,U)$ exists for some time $T>0$. Defining $u=U \circ \eta^{-1}$, we obtain a $C^1$ vector field $u:[0,T) \times \RR \to \RR$ satisfying the Camassa-Holm equation \eqref{CH1} which depends continuously on $u_0$.
\end{theorem}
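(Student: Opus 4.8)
The plan is to assemble the final theorem from the pieces already established, treating it as an assembly of three claims: (i) the equivalence between the Cauchy problem for the Camassa-Holm equation and the first-order system \eqref{CH3}; (ii) the existence and uniqueness of a local solution curve $(\eta, U)$; and (iii) the continuous dependence of $u = U \circ \eta^{-1}$ on the initial data $u_0$. The equivalence in (i) is essentially the content of the Proposition rewriting \eqref{CH2} as \eqref{lagrangian}, applied with $V = \eta_t = U$; I would note that \eqref{CH3} is just \eqref{lagrangian} written as a first-order system, and that one recovers $u$ from $(\eta, U)$ via $u = U \circ \eta^{-1}$ because $\eta_t(t,x) = u(t, \eta(t,x))$ is exactly the flow equation \eqref{floweq}.

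For step (ii), the strategy is to apply the Picard Theorem (Theorem \ref{Picard}) on the Banach manifold $\mathcal{D}(\RR) \times \mathcal{V}_1(\RR)$. The right-hand side of \eqref{CH3} is the vector field $(\eta, U) \mapsto \bigl(U, \mathcal{L}_\eta(U^2 + U_x^2/(2\eta_x^2))\bigr)$. To invoke Picard I must verify that this vector field is locally Lipschitz, and the key input is Theorem \ref{C1}, which establishes that $F(\phi, \eta) = \mathcal{L}_\eta(\phi)$ is continuously differentiable from $\mathcal{V}_0(\RR) \times \mathcal{D}(\RR)$ to $\mathcal{V}_1(\RR)$. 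The remaining point is that the argument $\phi = U^2 + U_x^2/(2\eta_x^2)$ depends smoothly (in particular locally Lipschitz-continuously) on $(\eta, U) \in \mathcal{D}(\RR) \times \mathcal{V}_1(\RR)$ as a map into $\mathcal{V}_0(\RR)$; here I would use that $U, U_x$ are bounded, that $\eta_x$ is bounded below by $a > 0$, and that $\mathcal{V}_0$ is closed under the relevant products and that these products decay at infinity and lie in $L^2$. Composing this locally Lipschitz map with the $C^1$ map $F$ gives a locally Lipschitz vector field, so Picard yields a unique integral curve $(\eta, U)$ on some interval $[0,T)$ with $(\eta(0), U(0)) = (\mathrm{Id}, u_0)$.

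For step (iii), continuous dependence, the plan is to combine two facts. First, Picard's theorem (as stated, with the flow $\alpha$ itself continuous and locally Lipschitz) already guarantees that the solution curve $(\eta, U)$ in $\mathcal{D}(\RR) \times \mathcal{V}_1(\RR)$ depends continuously on the initial point, hence continuously on $u_0$. Second, to transfer this continuity to $u = U \circ \eta^{-1}$, I invoke the topological group structure established in Section 2: by Lemma \ref{topgroup} the inversion $\eta \mapsto \eta^{-1}$ is continuous, and by Lemma \ref{composition1} the composition $\mathrm{Comp}^1 : (U, \eta^{-1}) \mapsto U \circ \eta^{-1}$ is continuous as a map $\mathcal{V}_1(\RR) \times \mathcal{D}(\RR) \to \mathcal{V}_1(\RR)$. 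Thus $u = U \circ \eta^{-1}$ is a continuous function of $(\eta, U)$, and composing continuous maps yields continuous dependence of $u$ on $u_0$.

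The main obstacle I anticipate is \emph{not} the existence or the continuous dependence per se, but verifying cleanly that the nonlinear argument $\phi = U^2 + U_x^2/(2\eta_x^2)$ genuinely lands in $\mathcal{V}_0(\RR)$ and depends locally Lipschitz-continuously on $(\eta, U)$. The subtle points are: the division by $\eta_x^2$ requires the uniform lower bound $a \le \eta_x$, which is preserved only on a neighborhood in $\mathcal{D}(\RR)$ (and shrinks $U_0$, but that is acceptable for a local statement); the product $U^2$ is in $C^0_0 \cap L^2$ because $U$ decays and is bounded, so $U^2 \in L^1 \cap L^\infty \subset L^2$, and similarly for $U_x^2/\eta_x^2$; and establishing the Lipschitz estimate on the quotient requires controlling $1/\eta_x^2 - 1/\tilde\eta_x^2$ in $L^\infty$, which follows from the lower bound on $\eta_x$ together with $\|\eta_x - \tilde\eta_x\|_{L^\infty} \le \|\eta - \tilde\eta\|_{1,1}$. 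Once this map is shown to be locally Lipschitz into $\mathcal{V}_0$, composing with the $C^1$ operator $F$ of Theorem \ref{C1} closes the argument, and the rest is bookkeeping with the already-proven continuity of the group operations.
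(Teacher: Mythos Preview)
Your proposal is correct and follows essentially the same route as the paper: equivalence via the Lagrangian reformulation, existence from Picard's theorem applied to the $C^1$ vector field (using Theorem~\ref{C1} composed with the smoothness of $(\eta,U)\mapsto U^2+U_x^2/(2\eta_x^2)$), and continuous dependence by writing $u_0\mapsto u$ as the composition of the inclusion, the flow map, and $(\eta,U)\mapsto U\circ\eta^{-1}$ (continuous by Lemmas~\ref{topgroup} and~\ref{composition1}). Your discussion of why the argument map lands in $\mathcal{V}_0$ and is locally Lipschitz is more explicit than the paper's, which simply asserts that this map is smooth.
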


\begin{proof}
Clearly, the map $(\eta,U) \mapsto U^2+\frac{U_x^2}{2\eta_x^2}:\mathcal{D}(\RR) \times \mathcal{V}_1(\RR) \to \mathcal{V}_0(\RR)$ is smooth. We have shown in the Theorem \ref{C1} that $L_\eta$ is $C^1$. Hence, the composition of these two mappings is $C^1$. Then by Theorem \ref{Picard}, there exists a time $T>0$ such that the solution curve $(\eta, U)$ of \eqref{CH3} exists on the interval $[0,T)$. This means that there is a solution mapping
\begin{equation*}\begin{array}{rll}
\Upsilon:T\mathcal{D} \!\!\!&\multicolumn{2}{l}{\to T\mathcal{D}}\\
(\eta_0, u_0)\!\!&\mapsto\ \!(\eta, u)
\end{array}\end{equation*}
Then we can construct the following composition of maps
\begin{align*}
\mathcal{V}_1  \xhookrightarrow{\;\;\;\;\;\;\iota\;\;\;\;\;\;} \;\;T\mathcal{D}  \xrightarrow{\;\;\;\;\;\Upsilon\;\;\;\;\;} T\mathcal{D} \xrightarrow{\;\;\;\;\;\;\;\;\;\;} \;&\mathcal{V}_1\\
u_0 \;\;\;\;\longmapsto\;\;(\mathrm{Id},u_0) \;\longmapsto \;(\eta,U) \;\longmapsto\;\;&U \circ \eta^{-1},
\end{align*}
where the first map is inclusion and the last map is the inversion followed by composition. Hence, we obtain the solution $u=U \circ \eta^{-1}$ of the original equation \eqref{CH1}. Continuity of $u$ follows from the fact that the above mapping is a composition of continuous maps.
\end{proof}

Although the map from $u_0$ to $u(t)$ is continuous, it is not even uniformly continuous, as mentioned earlier~\cite{HKM2010}. On the other hand
the map from $u_0$ to $\eta(t)$ is not only continuous, it is $C^1$ in both variables, which follows from the fact that the vector field in
\eqref{CH3} is $C^1$. With more work we could show that in fact the vector field and thus the solution map is $C^{\infty}$, as happens for fluids~\cite{EM1970}. Analogously we can show that Lagrangian trajectories are $C^{\infty}$ functions of time, even though the data is only spatially $C^1$. The essential feature here is that the PDE can be written in Lagrangian form in a way that does not lose derivatives, and typically this is enough to make the vector field not merely continuous but in fact $C^{\infty}$. Similar techniques should work for other Euler-Arnold equations in the $C^1$ topology, at least in one dimension.

\end{document}